\newtheorem{theorem}{Theorem}[section]
\newtheorem{lemma}[theorem]{Lemma}
\newtheorem{cor}[theorem]{Corollary}
\newtheorem{remark}[theorem]{Remark}
\newtheorem{proposition}[theorem]{Proposition}
\newtheorem{example}[theorem]{Example}
\thanks{Research of K.~E.~Hare was supported by NSERC Grant 2016-03719}
\thanks{Research of K.~G.~Hare was supported by NSERC Grant 2014-03154}
\thanks{Research of B.~Morris was supported by Fields Undergraduate Summer Research Program}
\thanks{Research of W.~Shen was supported by the NSERC USRA program and Grants 2014-03154 and 2016-03719}
\numberwithin{equation}{section}
\begin{document}
\title{The Entropy of Cantor--like measures}
\author{Kathryn E. Hare}
\address{Dept. of Pure Mathematics, University of Waterloo, Waterloo, Ont.,
N2L 3G1, Canada}
\email{kehare@uwaterloo.ca}
\author{Kevin G. Hare}
\address{Dept. of Pure Mathematics, University of Waterloo, Waterloo, Ont.,
N2L 3G1, Canada}
\email{kghare@uwaterloo.ca}
\author{Brian P. M. Morris}
\address{Dept. of Mathematics, Stanford University, Stanford, CA, 94305, USA}
\email{bpmm@stanford.edu}
\author{Wanchun Shen}
\address{Dept. of Pure Mathematics, University of Waterloo, Waterloo, Ont.,
N2L 3G1, Canada}
\email{w35shen@edu.uwaterloo.ca}
\maketitle

\begin{abstract}
By a Cantor-like measure we mean the unique self-similar probability measure 
$\mu $ satisfying $\mu =\sum_{i=0}^{m-1}p_{i}\mu \circ S_{i}^{-1}$ where $%
S_{i}(x)=\frac{x}{d}+\frac{i}{d}\cdot \frac{d-1}{m-1}$ for integers $2\leq
d<m\le 2d-1$ and probabilities $p_{i}>0$, $\sum p_{i}=1$. In the uniform
case ($p_{i}=1/m$ for all $i$) we show how one can compute the entropy and
Hausdorff dimension to arbitrary precision. In the non-uniform case we find
bounds on the entropy.
\end{abstract}

\section{Introduction}

By a self-similar measure we mean the unique probability measure 
\begin{equation*}
\mu =\sum_{i=0}^{m-1}p_{i}\mu \circ S_{i}^{-1},
\end{equation*}%
where $S_{i}$ are linear contractions on $\mathbb{R}$ and $p_{i}>0$ are
probabilities with $\sum_{i=0}^{m-1}p_{i}=1$. We restrict our attention to
those measures whose support is $[0,1]$. It is known that these self-similar
measures are either purely singular or absolutely continuous with respect to
Lebesgue measure \cite{JW}, but it is often difficult to determine which is
the case for a particular example.

An interesting class of examples are the Bernoulli convolutions where $m=2$, 
$S_{0}(x)=x/\varrho $ and $S_{1}=x/\varrho +1-1/\varrho $ for some $\varrho \in (1,2)$.
These have been extensively studied since the 1930's when Erd{\"{o}}s \cite%
{Erdos39} showed that if $\varrho \in (1,2)$ was a Pisot number, then the
Bernoulli convolution was purely singular and later, in \cite{Erdos40}, that
the Bernoulli convolutions were absolutely continuous for almost all $\varrho
\in (1,2)$. For more on the history of these classical problems see \cite%
{Sixty, Va}.

In \cite{Ga}, Garsia showed that the notion of entropy was useful for
studying the dimensional properties of Bernoulli convolutions. Subsequently,
the Garsia entropy was computed for various Bernoulli convolutions, first
for $\varrho =(1+\sqrt{5})/2$, the golden ratio (a simple Pisot number) in \cite%
{az}, then for all simple Pisot numbers in \cite{bo, gkt}, and, finally, for
all algebraic integers in \cite{AFKP, BV}. Edson, in \cite{Edson},
generalized these results in a different direction, considering the
contraction factor $1/\varrho $ where $\varrho $ is the root of $x^{2}-ax-b$ with $%
a\geq b$ and $a$ equally spaced linear contractions.

This paper focuses on a different generalization, to the case where $\varrho =d$
is an integer greater than or equal to $2$ and $m$ equally spaced
contractions $S_{j}$ of the form 
\begin{equation}
S_{j}(x)=\frac{x}{d}+\frac{j}{d}\cdot \frac{d-1}{m-1}\hspace{1cm}j=0,1,\dots
,m-1  \label{Sj}
\end{equation}%
where $2\leq d<m\leq 2d-1$. If, for example, $d=3$, $m=4$ and the
probabilities satisfy $p_{0}=p_{3}=1/8$, $p_{1}=p_{2}=3/8$, then the
associated self-similar measure is the $3$-fold convolution product of the
classical middle-third Cantor measure. We call these self-similar
Cantor-like measures,\textit{\ }$(m,d)$\textit{-measures}, and refer to them
as \textit{uniform} if all $p_{i}$ are equal. The dimensional properties of
these measures are also of much interest; see, for example, \cite{BHM, HHN, LW, Sh}.

We use combinatorial techniques to find an (explicit) analytic function $T$
with the property that the Garsia entropy of the uniform $(m,d)$-measure is
given by $T(1)/\log _{2}d$ when $2\leq d<m\leq 2d-1$. This is done in
Section 3 where we first illustrate the method with the simple case $d=2,$ $%
m=3$, and then handle the general uniform $(m,d)$-measure. Bounds are found
for the Garsia entropy of the non-uniform $(m,d)$-measures, a more
complicated problem, in Section 4. In Section 5, we use precise information
about the function $T$ to give numerically significant estimates for the
entropy in the uniform case when $2\leq d\leq 10$ and give ranges for the
value of the entropy for some non-uniform examples. We begin, in Section \ref%
{sec:discussion}, with the definition of the Garsia entropy and a discussion
of some of the combinatorial ideas we use in the proofs.

As with sets, there is a notion of the Hausdorff dimension of a probability
measure $\mu $ defined as 
\begin{equation*}
\dim _{H}\mu =\inf \{\dim _{H}E:\mu (E)>0\}.
\end{equation*}%
If $\mu $ is a measure on $\mathbb{R}$ and $\dim _{H}\mu <1,$ then $\mu $ is
singular. For self-similar measures arising from a set of contractions that
satisfy the open set condition there is a simple formula for computing $\dim
_{H}\mu $ (c.f. \cite{Fa}). But neither the Bernoulli convolutions nor the
Cantor--like measures satisfy this separation property and their Hausdorff
dimensions can be difficult to compute. It is a deep result of Hochman \cite%
{Hochman2014} (see \cite{BV} for details) that the Hausdorff dimension of
measures on $\mathbb{R}$ satisfying a suitable separation condition (which
includes Bernoulli convolutions with contraction factor an algebraic number
and the $(m,d)$-measures) is the minimum of $1$ and the Garsia entropy of
the measure, thus our results also give new estimates on the Hausdorff
dimensions of these measures.

The Hausdorff dimension of a self-similar measure can also be found from its 
$L^{q}$ spectrum; see \cite{Ng} for details. Using this approach, infinite
series representations have been found in \cite{LN} for the Hausdorff
dimension of the $(d,d+1)$-measures and in \cite{Feng} for Bernoulli
convolutions with contraction factor the inverse of a of simple Pisot
number. These involve matrix products, hence are less computationally
efficient.\texttt{\ }Numerical values (to four digits) were given in \cite%
{LN} for the special case of the $3$-fold convolution of the classic Cantor
measure.

\section{A combinatorial approach to the Garsia entropy}

\label{sec:discussion}

\subsection{The $(m,d)$-graph and entropy of the $(m,d)$-measure}

We will take a combinatorial approach to studying the Garsia entropy of the
Cantor-like $(m,d)$-measures $\mu =\sum_{j=0}^{m-1}p_{j}\mu \circ
S_{j}^{-1}, $ with $S_{j}$ as in (\ref{Sj}) and integers $d,m$ satisfying $%
2\leq d<m\leq 2d-1$. \ 

For this, we will need to introduce further notation. Given $\sigma \in
\{0,1,\dots ,m-1\}^{n}$, say $\sigma =(\sigma _{1},\sigma _{2},\dots ,\sigma
_{n}),$ we set $S_{\sigma }=S_{\sigma _{1}}\circ \dots \circ S_{\sigma _{n}}$
and call $\sigma $ a word of length $\left\vert \sigma \right\vert =n$. We
write $p_{\sigma }$ for the product $p_{\sigma _{1}}\cdot \cdot \cdot
p_{\sigma _{n}}$.

It is possible for $S_{\sigma }=S_{\tau }$ with $\left\vert \sigma
\right\vert =\left\vert \tau \right\vert $, but $\sigma \neq \tau $; the
Garsia entropy takes into account how often these overlaps occur and the
associated probabilities. To compute this, we create a graph where there is
a single root, which we can think of as $S_{\emptyset }(0)$. The nodes at
level $n\geq 1$ are the distinct images $S_{\sigma }(0)$ for $|\sigma |=n$
and a node $S_{\sigma }(0)$ at level $n$ is connected to all the nodes of
the form $S_{\sigma i}(0)$, $i=0,1,\dots,m-1$ at level $n+1$. We call this
the $(m,d)$\emph{-graph}. See Figure \ref{3-2-whole-graph} for an example.

Denote by $g_{n}$ the set of nodes at level $n\geq 0$ in the graph. For $%
z\in g_{n}$, we will denote by $[z]_{n}$ the set of all $\sigma $ with $%
\left\vert \sigma \right\vert =n$ such that $z=$ $S_{\sigma }(0)$. We assign
weight $w_{z}$ to the node $z$, where 
\begin{equation*}
w_{z}=\sum_{\sigma \in \lbrack z]_{n}}p_{\sigma }
\end{equation*}%
and we let $W_{n}$ denote the set of all weights $w_{z}$ associated to some $%
z\in g_{n}$.

The entropy of the $n$\textsuperscript{th} level of the weighted $(m,d)$%
-graph associated with the $(m,d)$-measure $\mu $ is defined as%
\begin{equation}
h_{\mu }(n)=-\sum_{w_{z}\in W_{n}}w_{z}\log _{2}w_{z}  \label{hmu}
\end{equation}%
and the \emph{Garsia entropy }$\mathfrak{H}_{\mu }$ (hereafter called the%
\emph{\ entropy}) of $\mu $ is given by%
\begin{equation}
\mathfrak{H}_{\mu }=\lim_{n\rightarrow \infty }\frac{h_{\mu }(n)}{n\log _{2}d%
}.  \label{entropymu}
\end{equation}

Another way to describe this calculation is as follows. Put%
\begin{equation*}
\mu _{n}=\bigotimes {}_{k=1}^{n}\left( \sum_{j=1}^{n}p_{j}\delta _{\frac{%
j(d-1)}{m-1}d^{-k}}\right)
\end{equation*}%
where $\bigotimes $ denotes the convolution product. These discrete measures
converge weak $\ast $ to $\mu $ and supp$\mu _{n}=\{S_{\sigma
}(0):\left\vert \sigma \right\vert =n\}$. Denote by $D_{n}$ the partition of 
$[0,1]$ into $(m-1)d^{n}$ equally spaced points. Each subinterval $[a,b)$ of 
$D_{n}$ can be identified with a unique node at level $n$, namely the node $%
z $ such that $S_{\sigma }(0)$ belongs to the subinterval for $\sigma \in
\lbrack z]_{n}$. The weight, $w_{z}$, equals $\mu _{n}([a,b))$.

With this notation, we have 
\begin{equation*}
h_{\mu }(n)=-\sum_{\Delta \in D_{n}}\mu _{n}(\Delta )\log _{2}(\mu
_{n}(\Delta )).
\end{equation*}%
Thus one can see that $h_{\mu }(n)=H(\mu _{n},D_{n})$ (where $D_{n}$ is the
partition we have described, rather than the partition into $2^{n}$ equally
spaced points), in the notation of \cite{Hochman2014}.

In the case of the uniform $(m,d)$-measure, the measure $\mu ,$ and hence
also $h_{\mu }$ and $\mathfrak{H}_{\mu },$ depend only on $m$ and $d$, and
we will write $h_{m,d}$ and $\mathfrak{H}_{m,d}$. The entropy calculation
can be simplified in this case: We will let $\mathrm{freq}(z)$ denote the
number of paths from the root node to $z\in g_{n}$. As $p_{\sigma }=m^{-n}$
for all $\sigma $ of length $n$, $w_{z}=m^{-n}\mathrm{freq}(z)$. Thus 
\begin{equation}
h_{m,d}(n)=-\sum_{z\in g_{n}}(m^{-n}\mathrm{freq}(z))\log _{2}(m^{-n}\mathrm{%
freq}(z)).  \label{h}
\end{equation}

If we let $f_{m,d}(n,k)$ denote the number of nodes in level $n$ with
frequency $k,$ then we have 
\begin{eqnarray*}
h_{m,d}(n)&=&\sum_{k=1}^{\infty }\sum_{\substack{ z\in g_{n},  \\ \mathrm{%
freq}(z)=k}} m^{-n}k\log _{2}(m^{-n}k) \\
&=&-m^{-n}\sum_{k=1}^{\infty }f_{m,d}(n,k)k(-n\log _{2}m+\log _{2}k).
\end{eqnarray*}%
Since the total number of nodes at level $n$ (counted by frequency) is $%
m^{n},$ this reduces to%
\begin{equation}
h_{m,d}(n)=n\log _{2}m-m^{-n}\sum_{k=1}^{\infty }f_{m,d}(n,k)k\log _{2}k.
\label{hsimp}
\end{equation}

When $\mu ,m$ or $d$ are clear, we may suppress them in the notation.

\subsection{Generating functions associated with the $(m,d)$ graph}

For studying the entropy of the uniform $(m,d)$-measure it is helpful to
introduce generating functions associated with the $(m,d)$-graph: Denote by 
\begin{equation*}
H(x)=H_{m,d}(x)=\sum_{n=0}^{\infty }h_{m,d}(n)x^{n}
\end{equation*}%
the generating function for the entropies of the levels of the $(m,d)$-graph 
and denote by $F_{k}$ the generating function (for the number of nodes of
frequency $k$ at each level) of the $(m,d)$-graph, 
\begin{equation*}
F_{k}(x)=\sum_{n=0}^{\infty }f(n,k)x^{n},
\end{equation*}%
and the related function 
\begin{equation*}
\mathcal{F}(x,s)=\sum_{k=2}^{\infty }k^{s}F_{k}(x).
\end{equation*}%
Since we assume $m\leq 2d-1$, the largest frequency at level $n$ is at most
twice the largest frequency at level $n-1$ and thus $f(n,k)=0$ if $k>2^{n}$.
Further, $f(n,k)\leq m^{n}$, hence $F_{k}(x)=\sum_{n\geq \log
_{2}k}f(n,k)x^{n}$ and 
\begin{equation*}
\left\vert F_{k}(x)\right\vert \leq \sum_{n\geq \log _{2}k}m^{n}\left\vert
x\right\vert ^{n}\leq c\varepsilon ^{\log _{2}k}\text{ if }\left\vert
x\right\vert \leq \varepsilon /m.
\end{equation*}%
It follows from these bounds that for $x$ small enough, $\frac{\partial }{%
\partial s}\mathcal{F}(x,s)|_{s=1}$ can be obtained by differentiating the
series term-by-term.

\subsection{Euclidean tree}

The $(m,d)$-graph is closely connected to the Euclidean tree, as we will
explain in Sections \ref{sec:2,1} and \ref{sec:d,r}, and will be helpful in
studying the entropy of the uniform $(m,d)$-measure. Here we describe the
construction of the Euclidean tree.

Start with two nodes connected by an edge, a root node with label $\{1,1\}$
at level $n=0$ and a node with label $\{2,1\}$ at level $n=1$. For each node 
$\{a,b\}$ in level $n\geq 1$, add two children with labels $\{a,a+b\}$ and $%
\{a+b,b\}$ in level $n+1$. This graph is the Euclidean tree and is
illustrated in Figure \ref{3-2-subgraph-with-euc-tree}.

Notice that all labels in the Euclidean tree are coprime pairs and that a
path from some node $\{a,b\}$ in the Euclidean tree to the root records the
steps involved in executing the simple Euclidean algorithm (the Euclidean
algorithm, but with repeated subtraction replacing division) on the pair $%
\{a,b\}$. Define $e(k,i)$ to be the number of steps it takes to reduce the
pair $\{k,i\}$ to their GCD via the simple Euclidean algorithm. For every
coprime pair $\{k,i\}$, $e(k,i)=n$ if and only if the pair $\{k,i\}$ is
found on level $n$ of the Euclidean tree. We refer the reader to \cite{az}
for further description and the history of the Euclidean tree.

Let $a(n,k)$ be the number of times that the integer $k$ occurs as the
larger value of a label at level $n$ of the Euclidean tree and let 
\begin{equation*}
A_{k}(x)=\sum_{n=0}^{\infty }a(n,k)x^{n}=\sum_{\substack{ 1\leq i\leq k  \\ %
\gcd (i,k)=1}}x^{e(k,i)}
\end{equation*}%
be the generating function (for occurrences of $k$ in level $n)$ of the
Euclidean tree. (In our notation, $A_{k}(x)$ is the function $\hat{\alpha}%
(x) $ in \cite{az}.) Each occurrence of $k\geq 2$ as a label (larger \textit{%
or} smaller) in the Euclidean tree can be traced up the Euclidean tree to an
occurrence as the larger label, and each time $k$ appears as a larger label
there is a single line of descendants in which it appears as the smaller
label. For instance, the 2 in the label $\{5,2\}$ on level $n=3$ of the
Euclidean tree can be traced back up to the label $\{1,2\}$ on level 1.
Thus, the family of generating functions for larger \textit{and} smaller
labels in the Euclidean tree is $(1+x+x^{2}+\ldots )A_{k}(x)=\frac{1}{1-x}%
A_{k}(x)$ for $k\geq 2$.

We define%
\begin{equation*}
\mathcal{A}(x,s)=\sum_{k=2}^{\infty }k^{s}A_{k}(x)=\sum_{n}x^{n}\sum 
_{\substack{ 1 \leq i \leq k  \\ \gcd (i,k)=1  \\ e(k,i)=n}}k^{s}
\end{equation*}%
and, as with $\mathcal{F(}s,x),$ one can show that $\frac{\partial }{%
\partial s}\mathcal{A}(x,s)|_{s=1}$ can be obtained by differentiating the
series term-by-term for sufficiently small $x$.

\section{Entropy of the uniform $(m,d)$-measures}

\subsection{The entropy generating function for the $d=2,m=3$ case}

\label{sec:2,1}

In this first subsection we consider the case when $d=2$ and $m=3$. This
case will illustrate the key combinatorial ideas without the complications
that arise in the general case, making precise the relationship between $%
F_{k}(x)$ (the generating function of the $(m,d)$-graph), $A_{k}(x)$ (the
generating function for the Euclidean tree), and $H_{k}(x)$ (the entropy
generating function).

Figure \ref{3-2-whole-graph} shows the first few levels of the $(3,2)$-graph
associated with the $(3,2)$-measure. Each node has three children: a middle
child, whose frequency is the same as its parent, and a left and a right
child. The left child of a node $X$ is the right child of $X$'s left
neighbour, and thus its frequency is the sum of those of its parents. The
analogous situation holds for the right child. A node of frequency $k$
induces a column of frequency-$k$ nodes below it.

\begin{figure}[tbp]
\includegraphics[width=\textwidth]{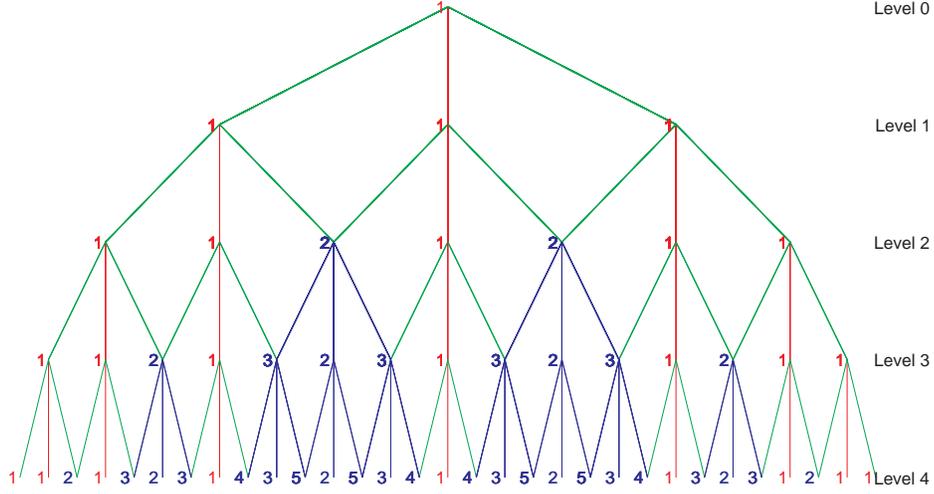}
\caption{The infinite self-similar graph associated with the $(3,2)$-measure}
\label{3-2-whole-graph}
\end{figure}

\subsubsection{Generating functions of subgraphs}

The first step is to partition the full $(3,2)$-graph into subgraphs we call
the $G$ and $P$-subgraphs. We will show that the $G$-subgraph is closely
related to the Euclidean tree and using this we will see how to compute its
generating functions. The $P$-subgraphs turn out to be very simple in this
particular case.

Note that the only frequency-one nodes in the graph are on the left and
right arcs descending from the top node and in the columns below the nodes
in those arcs. It is easy to see that these columns of ones partition the
graph into an infinite number of copies of the subgraph depicted in Figure %
\ref{3-2-subgraph}, with two copies starting at each level. We will call the
subgraphs between these column of ones the $\emph{G}$\emph{-subgraphs}.
Note, for example, that there are no nodes of the $G$-subgraph at relative
level 0, one node of weight 2 at level 1 and three nodes (two of weight 3
and one of weight 2) at level 2.

We will let $G_{k}(x)$ be the generating function for the number of nodes of
weight $k$ at level $n$ of the $G$-subgraph. For example $%
G_{5}(x)=2x^{3}+4x^{4}+\dots $.

The column of ones that divide various $G$-subgraphs will be called the 
\emph{P-subgraphs}. The generating functions for the $P$-subgraphs are
simply 
\begin{equation*}
P_{1}(x)=1+x+x^{2}+\cdots =\frac{1}{1-x}\text{ and }P_{k}(x)=0\text{ for }%
k\geq 2.
\end{equation*}

There is one $P$-subgraph starting at level 0 and two $P$-subgraphs starting
at each level $n\geq 1$. In addition, there are two $G$-subgraphs starting
at level $n$ for all $n\geq 1$. This gives the relationship 
\begin{equation}
F_{k}(x)=P_{k}(x)+\frac{2x}{1-x}P_{k}(x)+\frac{2x}{1-x}G_{k}(x).  \label{FPG}
\end{equation}

\begin{figure}[tbp]
\includegraphics[width=\textwidth]{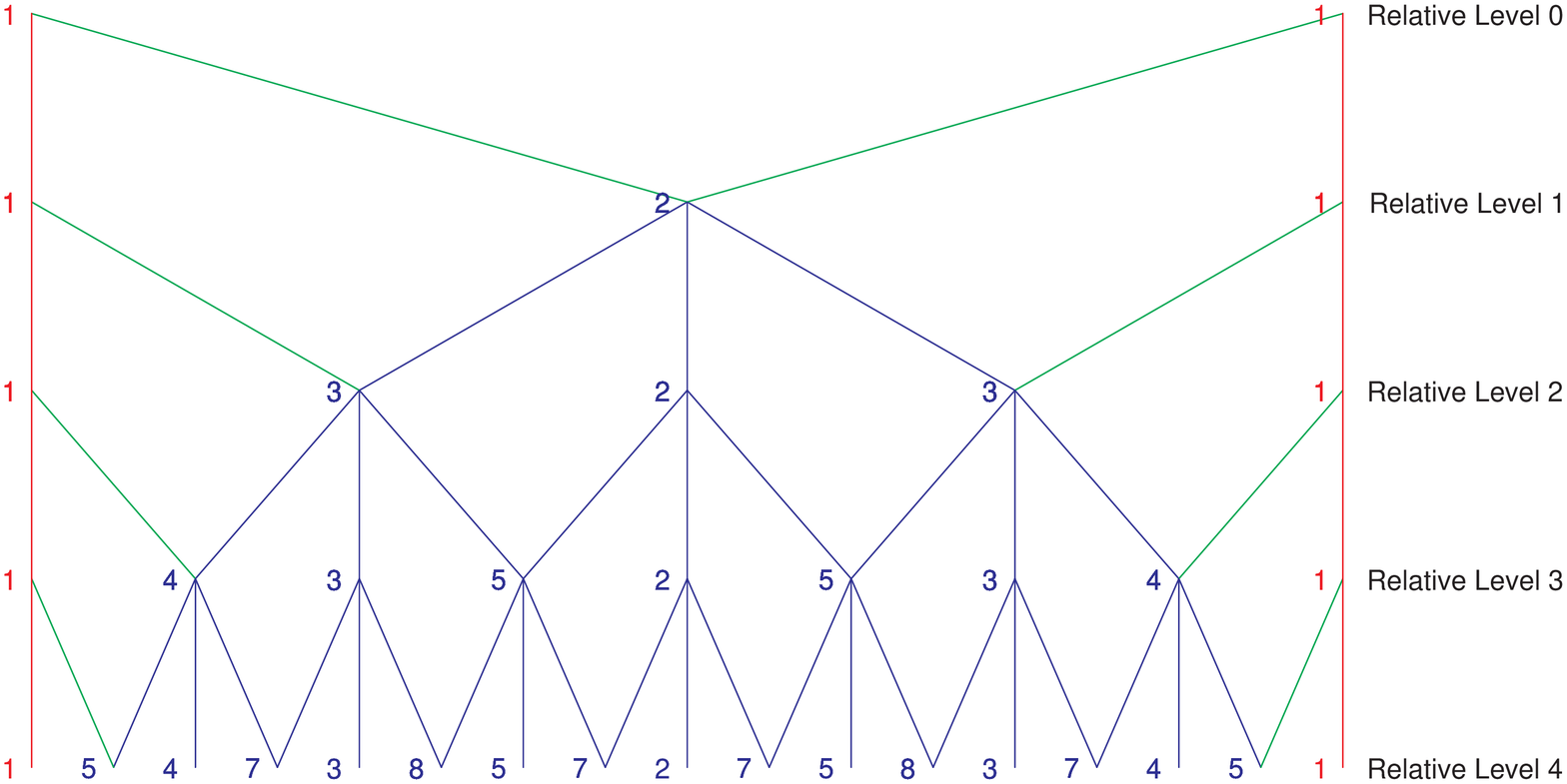}
\caption{The $G$-subgraph (blue) and $P$-subgraphs (red) bounding it}
\label{3-2-subgraph}
\end{figure}

The $G$-subgraph essentially consists of two copies of the dual graph of the
Euclidean tree, as we now explain. Note that the $G$-subgraph is symmetric
about the middle column of twos. If we take the dual graph of one of its
halves, and label each resulting node with the pair of nodes adjacent to it
in the $G$-subgraph (see Figure \ref{3-2-subgraph-with-euc-tree}), then we
get the Euclidean tree. Figure \ref{3-2-subgraph-with-euc-tree} gives the
first few levels of the Euclidean tree, as well as demonstrating its duality
with the $G$-subgraph.

\begin{figure}[tbp]
\begin{center}
\includegraphics[width=\textwidth]{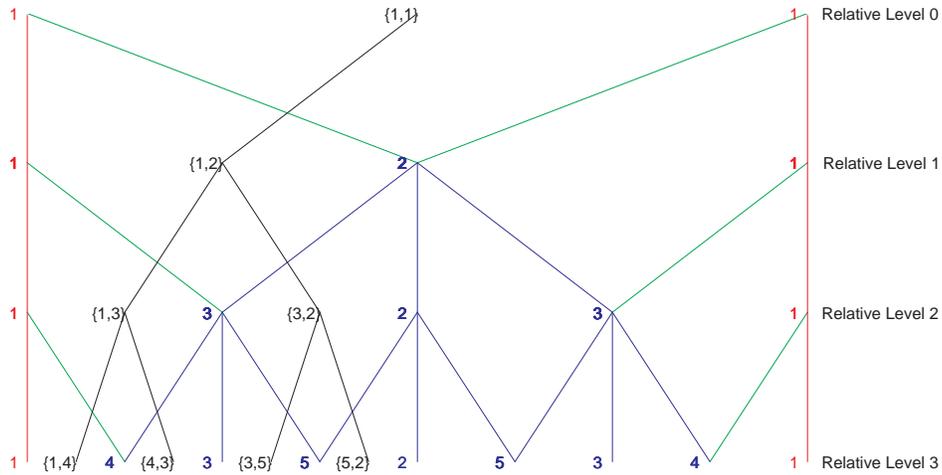}
\end{center}
\caption{The $G$-subgraph (blue) and $P$-subgraphs (red) bounding it and the
Euclidean tree dual (black)}
\label{3-2-subgraph-with-euc-tree}
\end{figure}

Since the generating functions for larger \textit{and} smaller labels in the
Euclidean tree is given by $\frac{1}{1-x}A_{k}(x)$ for $k\geq 2$, as
explained in the previous section, the duality relationship between the
Euclidean tree and the $G$-subgraph implies that the generating function for
the $G$-subgraph is 
\begin{equation*}
G_{1}(x)=0\text{, }G_{k}(x)=\frac{1}{1-x}A_{k}(x)\text{ for }k\geq 2\text{.}
\end{equation*}%
Combining this with equation (\ref{FPG}) shows that for $k\geq 2$, 
\begin{equation}
F_{k}(x)=P_{k}(x)+\frac{2x}{1-x}P_{k}(x)+\frac{2x}{(1-x)^{2}}A_{k}(x)=\frac{%
2x}{(1-x)^{2}}A_{k}(x).  \label{FPA}
\end{equation}

\subsubsection{The analytic extension of the entropy generating function}

\begin{theorem}
\label{thm:T} Let $H_{3,2}(x)=\sum_{n=1}^{\infty }h_{3,2}(n)x^{n}$ be the
generating function for the entropies of the levels of the $(3,2)$-graph.
There exists a function $T_{3,2}(x)$, analytic on a disk of radius $3$ about 
$0$, such that 
\begin{equation*}
H_{3,2}(x)=\frac{x}{(x-1)^{2}}T_{3,2}(x).
\end{equation*}
\end{theorem}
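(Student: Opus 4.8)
The plan is to express $h_{3,2}(n)$ via the formula \eqref{hsimp}, namely
$h_{3,2}(n) = n\log_2 3 - 3^{-n}\sum_{k\ge 2} f(n,k)\, k\log_2 k$, and then to pass to generating functions so that the messy arithmetic term becomes an evaluation of a two-variable function. First I would observe that $\sum_{n\ge 1} n\log_2 3\, x^n = (\log_2 3)\, x/(1-x)^2$, which already has the desired shape $x/(x-1)^2$ times something analytic (here just a constant). The remaining piece is $-\sum_{n\ge 1} 3^{-n} x^n \sum_{k\ge 2} f(n,k)\, k\log_2 k$; since $\sum_n f(n,k) x^n = F_k(x)$, replacing $x$ by $x/3$ turns this into $-\sum_{k\ge 2} k\log_2 k\, F_k(x/3)$. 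In the notation of the excerpt this is exactly $-\tfrac{1}{\log 2}\,\tfrac{\partial}{\partial s}\mathcal F(x/3, s)\big|_{s=1}$, the term-by-term differentiation being justified for small $x$ by the bound on $F_k$ given earlier. So
\[
H_{3,2}(x) = \frac{(\log_2 3)\,x}{(1-x)^2} - \sum_{k\ge 2} k\log_2 k\; F_k(x/3).
\]

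Next I would substitute the identity \eqref{FPA}, $F_k(x) = \dfrac{2x}{(1-x)^2} A_k(x)$ for $k\ge 2$, which is valid precisely on the range of summation. This gives
\[
\sum_{k\ge 2} k\log_2 k\; F_k(x/3) = \frac{2(x/3)}{(1-x/3)^2}\sum_{k\ge 2} k\log_2 k\; A_k(x/3)
= \frac{6x}{(3-x)^2}\sum_{k\ge 2} k\log_2 k\; A_k(x/3),
\]
and the series $\sum_{k\ge 2} k\log_2 k\, A_k(y)$ is, up to the constant $1/\log 2$, the function $\tfrac{\partial}{\partial s}\mathcal A(y,s)\big|_{s=1}$ from the Euclidean-tree discussion. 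Pulling the common factor $x/(x-1)^2 = x/(1-x)^2$ out front, I would define
\[
T_{3,2}(x) = \log_2 3 - \frac{6(1-x)^2}{(3-x)^2}\sum_{k\ge 2} k\log_2 k\; A_k(x/3),
\]
so that $H_{3,2}(x) = \dfrac{x}{(x-1)^2}\,T_{3,2}(x)$ holds as formal power series, hence as analytic functions near $0$.

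The real content is then to prove that $T_{3,2}$ extends analytically to $|x| < 3$; equivalently that $g(y) := \sum_{k\ge 2} k\log_2 k\, A_k(y)$ is analytic on $|y| < 1$, since the prefactor $6(1-x)^2/(3-x)^2$ is rational with its only pole at $x=3$. This is the step I expect to be the main obstacle, and it is genuinely a statement about the Euclidean tree rather than about entropy: one must control the growth of $\sum_{e(k,i)=n} k\log_2 k$ as $n\to\infty$. The key structural fact is that the labels occurring at level $n$ of the Euclidean tree are the pairs $\{a,b\}$ whose simple-Euclidean-algorithm takes exactly $n$ steps, and the largest such $b$ is a Fibonacci-type number — more precisely, the maximal larger-label at level $n$ grows only like $\varphi^n$ where $\varphi < 2$, while the number of coprime pairs at level $n$ is $2^{n-1}$. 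I would make this quantitative: show $\sum_{k} a(n,k)\, k \le C\, r^n$ for some $r < 3$ (indeed one can take $r$ close to $2$), using that each level-$n$ label arises from two level-$(n-1)$ labels $\{a,b\}\mapsto\{a,a+b\},\{a+b,b\}$ so the sum of larger-labels at most triples (in fact grows by a factor bounded away from $3$) from one level to the next. Since $k\log_2 k \le C_\varepsilon k^{1+\varepsilon}$, a slightly more careful version of the same estimate gives $\sum_k a(n,k)\, k\log_2 k \le C r^n$ with $r < 3$, whence $g(y)$ converges absolutely and is analytic for $|y| < 1/r \cdot$ — and by choosing the bound carefully, for $|y| < 1$. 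Finally I would note that $A_k$, and hence $g$, has nonnegative coefficients, so analyticity on the open disk plus the explicit rational prefactor (regular on $|x|<3$) yields the claimed analytic extension of $T_{3,2}$ to the disk of radius $3$, completing the proof.
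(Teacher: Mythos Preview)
Your reduction of $H_{3,2}$ to the Euclidean-tree generating functions $A_k$ and the extraction of the factor $x/(x-1)^2$ match the paper exactly, and your formula for $T_{3,2}$ coincides with theirs. The gap is in the analyticity argument. The claim that $\sum_k a(n,k)\,k \le C\,r^n$ for some $r<3$ is false: the sum of the larger labels at level $n$ of the Euclidean tree equals $2\cdot 3^{n-1}$ \emph{exactly} for every $n\ge 1$. (From a node $\{a,b\}$ the two children both have larger label $a+b$, so the total of all labels triples from one level to the next; equivalently, the paper records $\mathcal A(x,1)=2x/(1-3x)$.) Hence $g(y)=\sum_{k\ge 2} k\log_2 k\,A_k(y)$ has $n$-th coefficient at least $2\cdot 3^{n-1}$ and radius of convergence exactly $1/3$, not $1$; no refinement of a crude growth bound can change this. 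Your Fibonacci observation about the maximal label is correct but only yields $\sum_k a(n,k)\,k\log_2 k=O(n\cdot 3^n)$, which again gives radius $1/3$.

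What actually makes $T_{3,2}$ analytic on $|x|<3$ is a cancellation, not a size estimate. In the variable $y=x/3$ your rational prefactor $6(1-x)^2/(3-x)^2$ becomes $2(1-3y)^2/\bigl(3(1-y)^2\bigr)$; the paper absorbs the factor $(1-3y)^2$ into $g$, setting $L(y)=(1-3y)^2 g(y)$, and then invokes a result of Grabner--Kirschenhofer--Tichy that the Taylor coefficients $\ell(n)$ of $L$ are \emph{bounded} (specifically $|\ell(n)|\le 2/(15\ln 2)$ for $n\ge 3$). That nontrivial cancellation is precisely what pushes the radius of $L$ to at least $1$, and with it the radius of $T_{3,2}$ to $3$. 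Without an argument of this type---showing that multiplication by $(1-3y)^2$ kills the $3^n$ growth of the coefficients---the proof does not go through.
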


\begin{cor}
\label{cor:T} With $T_{3,2}(x)$ defined as above, we have 
\begin{equation*}
\mathfrak{H}_{3,2}=T_{3,2}(1).
\end{equation*}
\end{cor}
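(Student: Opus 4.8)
The plan is to read the Cesàro asymptotics of the coefficients $h_{3,2}(n)$ directly off the factorization supplied by Theorem~\ref{thm:T}, exploiting that $T_{3,2}$ is analytic on a disk strictly larger than the unit disk. Since $d=2$ in this case, $\log_2 d=1$, so the defining limit \eqref{entropymu} reduces to $\mathfrak{H}_{3,2}=\lim_{n\to\infty}h_{3,2}(n)/n$; the entire task is therefore to evaluate this single limit.

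First I would expand both factors of $H_{3,2}(x)=\frac{x}{(x-1)^2}T_{3,2}(x)$ as power series about $0$. The rational factor is a standard geometric expansion, $\frac{x}{(x-1)^2}=\frac{x}{(1-x)^2}=\sum_{n\ge 1}n\,x^n$, while analyticity of $T_{3,2}$ on the disk of radius $3$ gives $T_{3,2}(x)=\sum_{j\ge 0}t_j x^j$ with radius of convergence at least $3$. The key consequence I would record is that, because this radius exceeds $1$, the coefficients $t_j$ decay geometrically, so both $\sum_{j\ge 0}|t_j|$ and $\sum_{j\ge 0}j\,|t_j|$ converge; in particular $\sum_j t_j=T_{3,2}(1)$ is a genuine, absolutely convergent value, and likewise $\sum_j j\,t_j=T_{3,2}'(1)$.

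Next I would extract $h_{3,2}(n)$ as the Cauchy product of the two series, namely
\[
h_{3,2}(n)=\sum_{j=0}^{n-1}(n-j)\,t_j=n\sum_{j=0}^{n-1}t_j-\sum_{j=0}^{n-1}j\,t_j,
\]
and hence
\[
\frac{h_{3,2}(n)}{n}=\sum_{j=0}^{n-1}t_j-\frac{1}{n}\sum_{j=0}^{n-1}j\,t_j.
\]
Letting $n\to\infty$, the first sum converges to $T_{3,2}(1)$ by absolute convergence of $\sum_j t_j$, and the correction term tends to $0$: since $\sum_j j\,t_j$ converges, its partial sums are bounded (indeed they converge to $T_{3,2}'(1)$), so dividing by $n$ annihilates them. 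This yields $\lim_{n\to\infty}h_{3,2}(n)/n=T_{3,2}(1)$, which is exactly the assertion of the corollary.

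The step that requires the most care — and the only place the hypothesis of Theorem~\ref{thm:T} is genuinely used — is the vanishing of the correction term $\frac1n\sum_{j<n}j\,t_j$, which rests on the summability $\sum_j j\,|t_j|<\infty$. This summability is precisely what analyticity of $T_{3,2}$ on a disk of radius strictly greater than $1$ provides. Were the radius only equal to $1$, this would instead be a genuine Tauberian question demanding extra hypotheses, so I would be careful to flag that the ``radius $3$'' conclusion of Theorem~\ref{thm:T} is exactly what makes the present argument elementary.
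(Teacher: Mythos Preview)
Your proof is correct and rests on the same key input as the paper's: the analyticity of $T_{3,2}$ on a disk of radius $>1$ forces enough decay on its Taylor coefficients that the remainder terms vanish after dividing by $n$. The mechanics differ slightly, however. The paper carries out a Laurent expansion of $H_{3,2}$ at the pole $x=1$, writing $H(x)=x\bigl(T(1)(x-1)^{-2}+T'(1)(x-1)^{-1}+\text{(analytic on }|x|<3)\bigr)$, and then reads off $h(n)=T(1)\,n-T'(1)+d(n-1)$ with $d(n)\to 0$; you instead compute the Cauchy product of $\sum_{n\ge 1}nx^{n}$ with $\sum_j t_jx^j$ directly at $x=0$, obtaining $h(n)=n\sum_{j<n}t_j-\sum_{j<n}jt_j$. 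The two formulas are equivalent (your tail $\sum_{j\ge n}(j-n)t_j$ is exactly the paper's $d(n-1)$), and your version is arguably the more elementary, avoiding the extra step of re-expanding the regular part of the Laurent series about $0$. The paper's route, on the other hand, makes the full second-order asymptotic $h(n)=T(1)\,n-T'(1)+o(1)$ more transparent, though only the leading term is needed for the corollary.
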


\begin{proof}
Let $U=\{x:|x|<3\}$. As $T(x)=T_{3,2}(x)$ is analytic on $U$, we see that $%
H(x)=H_{3,2}(x)$ is analytic on $U\setminus \{1\}$. As $H(x)=\frac{x}{%
(x-1)^{2}}T(x)$ and $T(x)$ is analytic on a disk of radius 2 around $1$,
there must exist coefficients $c(n)$ and $d(n)$ such that 
\begin{align*}
H(x)& =\sum_{n=0}^{\infty }h(n)x^{n}=x\sum_{n=-2}^{\infty }c(n)(x-1)^{n} \\
& =x\left( \frac{T(1)}{(x-1)^{2}}+\frac{T^{\prime }(1)}{x-1}%
+\sum_{n=0}^{\infty }c(n)(x-1)^{n}\right) \\
& =x\left( \frac{T(1)}{(x-1)^{2}}+\frac{T^{\prime }(1)}{x-1}%
+\sum_{n=0}^{\infty }d(n)x^{n}\right)
\end{align*}%
Since $H(x)$ is analytic on $U\setminus \{1\},$ we see that $%
\sum_{n=0}^{\infty }d(n)x^{n}$ is analytic on $U$ and hence $d(n)\rightarrow
0$ as $n\rightarrow \infty $. Further, $h(n)=T(1)n-T^{\prime }(1)+d(n-1)$,
whence 
\begin{equation*}
\mathfrak{H}_{3,2}=\lim_{n\rightarrow \infty }\frac{h(n)}{n\log _{2}2}=T(1).
\end{equation*}
\end{proof}

\begin{proof}[Proof of Theorem \protect\ref{thm:T}]
We remind the reader that $\mathcal{A}(x,s)=\sum_{k=2}^{\infty
}k^{s}A_{k}(x) $ and $\mathcal{F}(x,s)=\sum_{k=2}^{\infty }k^{s}F_{k}(x)$.
As these sums begin with $k=2$ and $P_{k}=0$ for $k\geq 2$, equation (\ref%
{FPA}) shows 
\begin{equation}
\mathcal{F}(x,s)=\frac{2x}{(1-x)^{2}}\mathcal{A}(x,s),  \label{DiffFA}
\end{equation}%
while differentiating the series $\sum k^{s}F_{k}(x)$ term-by-term with
respect to $s$ gives%
\begin{eqnarray*}
\frac{\partial }{\partial s}F(x,s) &=&\sum_{k}k^{s}\ln kF_{k}(x) \\
&=&\sum_{k}k^{s}\ln k\sum_{n}f(n,k)x^{n}.
\end{eqnarray*}%
From (\ref{hsimp}), we have 
\begin{equation*}
h(n)=n\log _{2}3-3^{-n}\sum_{k=1}^{\infty }f(n,k)k\log _{2}k,
\end{equation*}%
thus

\begin{align*}
H(x)& =\sum_{n=0}^{\infty }h(n)x^{n}=\sum_{n=0}^{\infty }\left( n\log
_{2}3-3^{-n}\sum_{k=1}^{\infty }f(n,k)k\log _{2}k\right) x^{n} \\
& =\sum_{n=0}^{\infty }nx^{n}\log _{2}3-\frac{1}{\ln 2}\left. \frac{\partial 
}{\partial s}\mathcal{F}(x/3,s)\right\vert _{s=1} \\
& =\frac{x}{(1-x)^{2}}\log _{2}3-\frac{2x}{3(1-x/3)^{2}\ln 2}\left. \frac{%
\partial }{\partial s}\mathcal{A}(x/3,s)\right\vert _{s=1}
\end{align*}%
where the final equality simply follows from (\ref{DiffFA}). Differentiating
the series $\sum_{k}k^{s}A_{k}(x)$, simplifying and using the definition of $%
A_{k}$ yields%
\begin{align*}
H(x)& =\frac{x}{(1-x)^{2}}\log _{2}3-\frac{2x}{3(1-x/3)^{2}}%
\sum_{k=2}^{\infty }k\log _{2}kA_{k}\left( \frac{x}{3}\right) \\
& =\frac{x}{(1-x)^{2}}\left( \log _{2}3-\frac{2(1-x)^{2}}{3(1-x/3)^{2}}%
\sum_{k=2}^{\infty }k\log _{2}kA_{k}\left( \frac{x}{3}\right) \right) \\
& =\frac{x}{(1-x)^{2}}\left( \log _{2}3-\frac{2(1-x)^{2}}{3(1-x/3)^{2}}%
\sum_{n=1}^{\infty }\left( \frac{x}{3}\right) ^{n}\sum_{\substack{ k>i,\gcd
(i,k)=1  \\ e(k,i)=n}}k\log _{2}k\right) .
\end{align*}

Finally, putting 
\begin{equation*}
L(x)=\sum_{n=1}^{\infty }\ell (n)x^{n}=(1-3x)^{2}\sum_{n=1}^{\infty
}x^{n}\sum_{\substack{ k>i,\gcd (i,k)=1  \\ e(k,i)=n}}k\log _{2}k,
\end{equation*}%
we conclude that 
\begin{equation*}
H(x)=\frac{x}{(1-x)^{2}}\left( \log _{2}3-\frac{2}{3(1-x/3)^{2}}%
L(x/3)\right) .
\end{equation*}

By \cite{gkt}, if $L(x)=\sum_{n=1}^{\infty }\ell (n)x^{n}$, then for $n\geq
3 $, $|\ell (n)|\leq \frac{2}{15\ln 2}$. This implies that $%
\sum_{n=1}^{\infty }\ell (n)x^{n}$ converges on the unit disk in the complex
plane, hence $H(x)$ has an analytic continuation to all $|x|<3$, as
required. Letting 
\begin{equation*}
T(x)=\log _{2}3-\frac{2}{3(1-x/3)^{2}}L(x/3)
\end{equation*}%
gives the desired result.
\end{proof}

\subsection{The entropy generating functions for the general uniform $(m,d)$%
-measure}

\label{sec:d,r}

\subsubsection{Generating functions of related subgraphs in the general case}

In the previous subsection, we used the fact that the $(3,2)$-graph can be
partitioned into a number of $G$-subgraphs and $P$-subgraphs and then showed
how the $G$-subgraph was related to the Euclidean tree. That allowed us to
find a generating function for the number of nodes of weight $k$ at level $n$
for the $(3,2)$-graph from which we developed the generating function for
the entropy.

In this subsection we will extend the notions of the $G$-subgraphs and $P$%
-subgraphs to the more general set up. Unfortunately, the $P$-subgraphs are
no longer simple as they will contain nodes with weights higher than 1. Both
graphs are still related to the Euclidean tree however, thus allowing us to
derive relations of $H(x)$ as before.

\begin{figure}[tbp]
\begin{center}
\includegraphics[width=\textwidth]{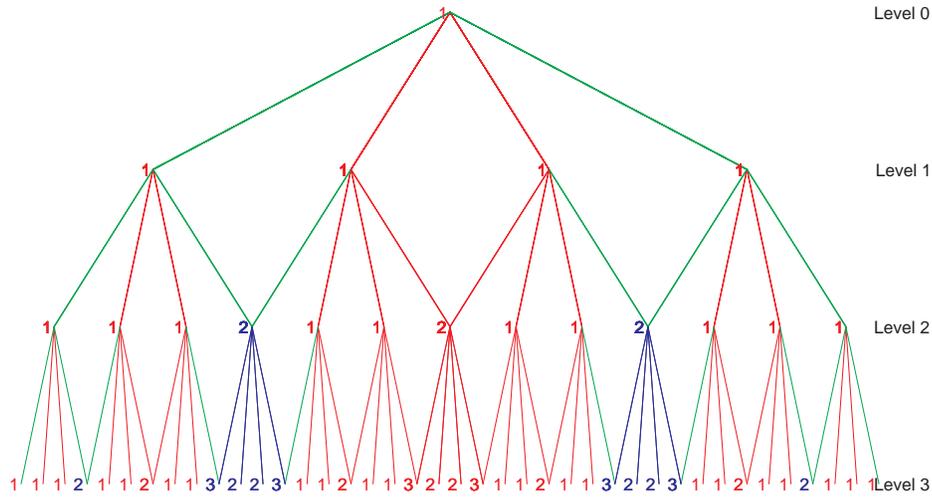}
\end{center}
\caption{The infinite self-similar graph associated with the $(4,3)$-measure}
\label{4-3-graph}
\end{figure}

\begin{figure}[tbp]
\begin{center}
\includegraphics[width=\textwidth]{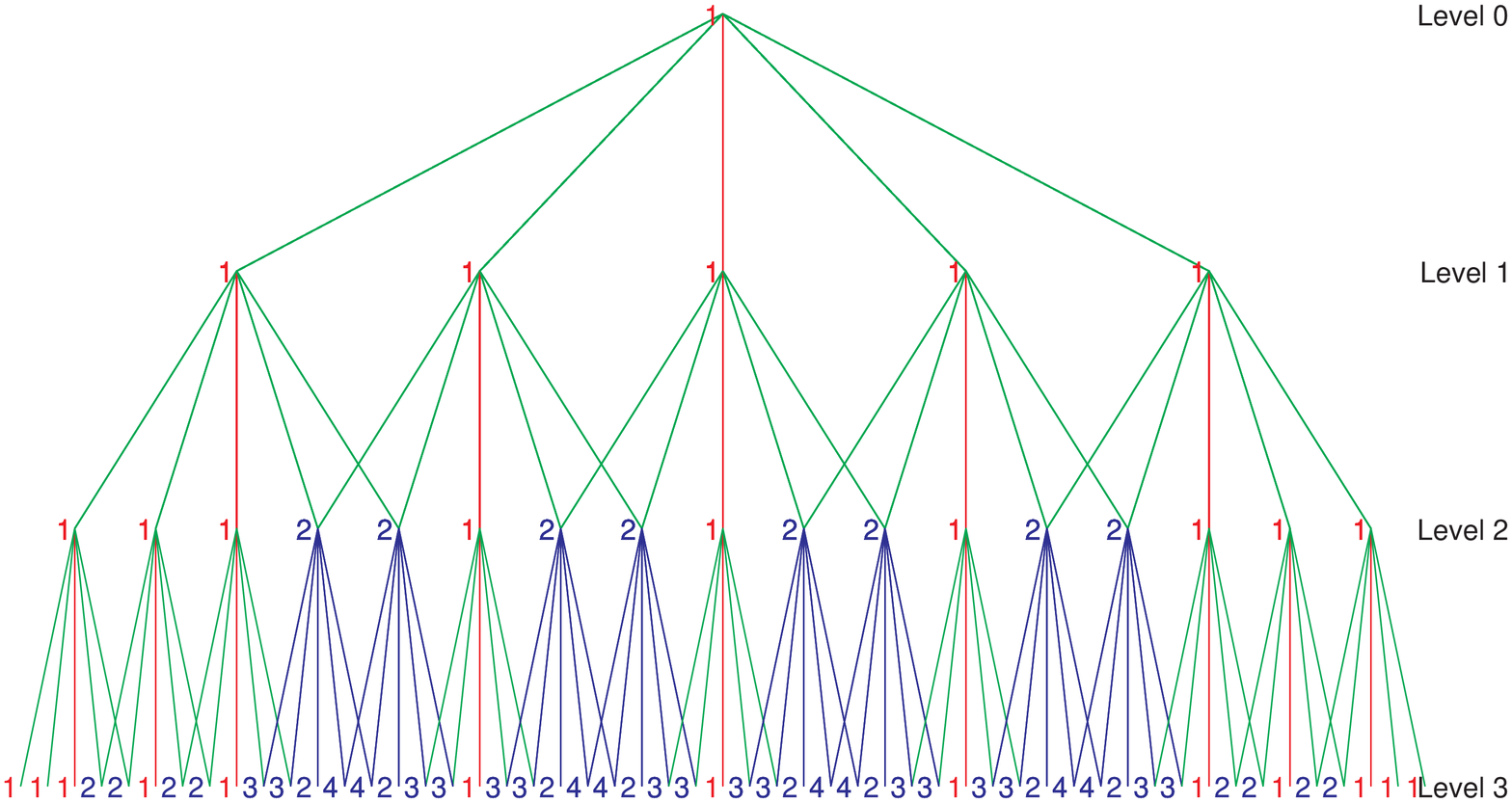}
\end{center}
\caption{The infinite self-similar graph associated with the $(5,3)$-measure}
\label{5-3-graph}
\end{figure}

\begin{figure}[tbp]
\begin{center}
\includegraphics[width=\textwidth]{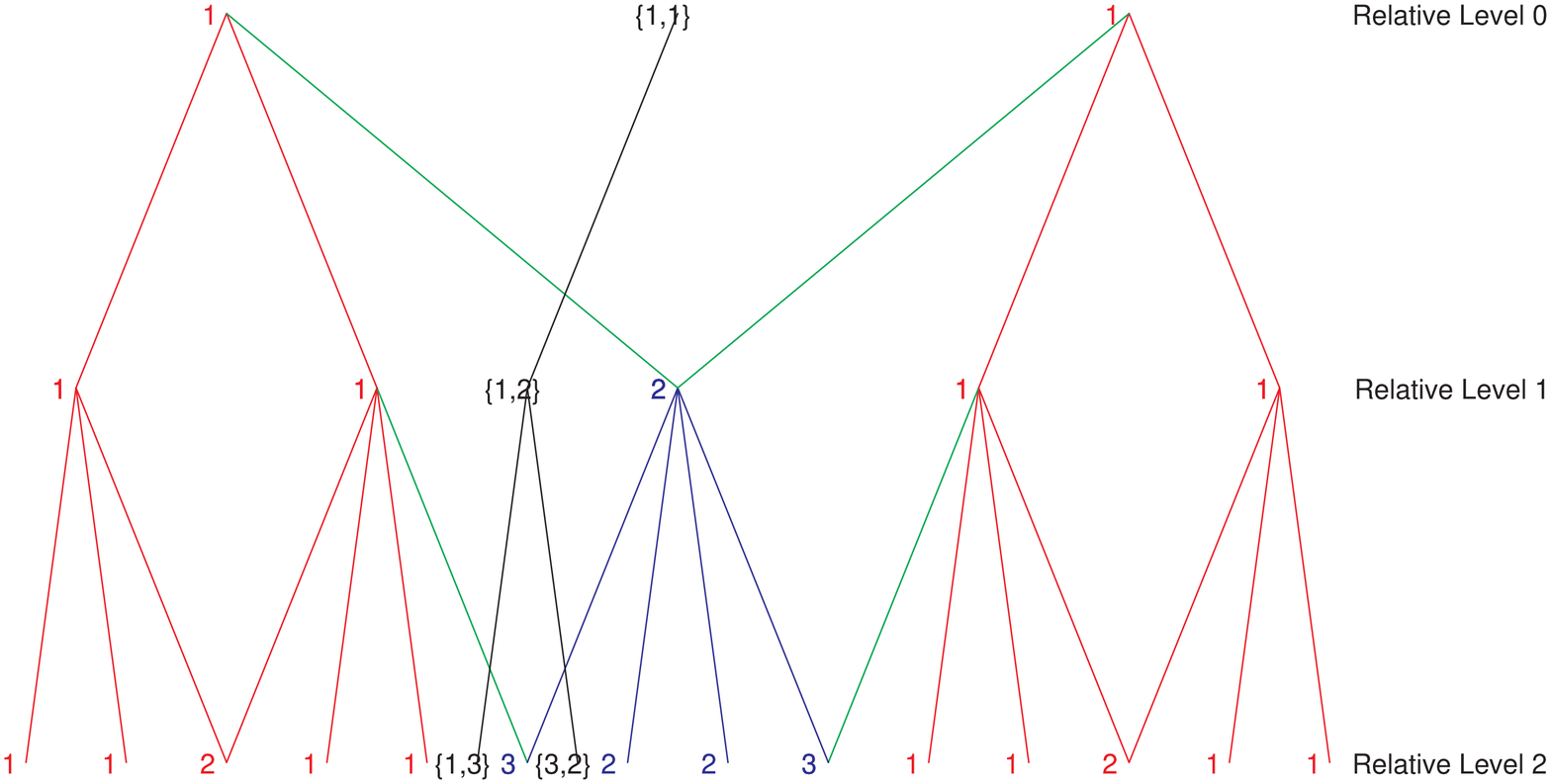}
\end{center}
\caption{The $G$-subgraph (blue) and $P$-subgraphs (red) bounding it and the
Euclidean graph dual (black)}
\label{4-3-subgraph-with-euc-tree}
\end{figure}

\begin{figure}[tbp]
\begin{center}
\includegraphics[width=\textwidth]{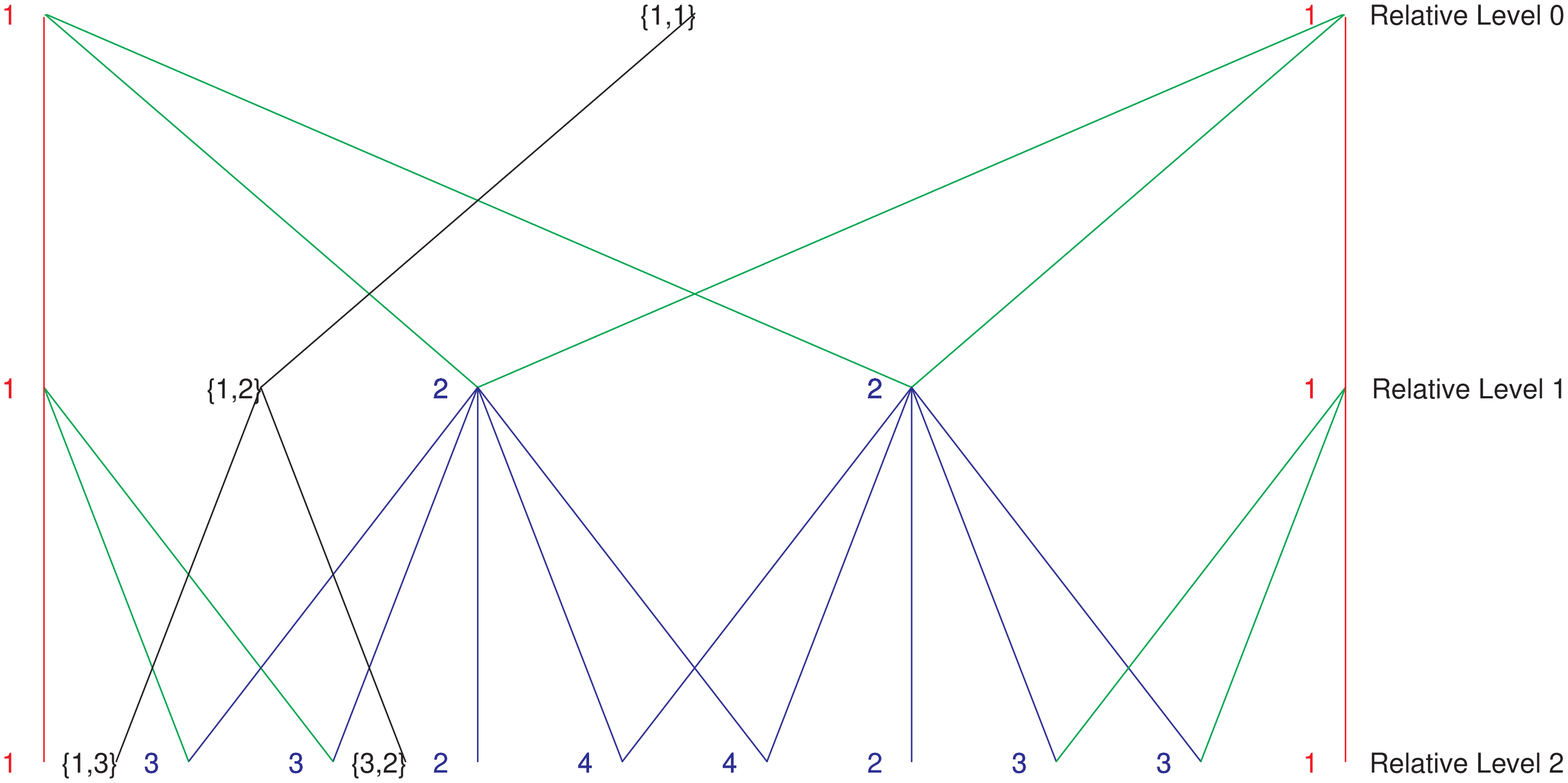}
\end{center}
\caption{The $G$-subgraph (blue) and $P$-subgraphs (red) bounding it and the
Euclidean graph dual (black)}
\label{5-3-subgraph-with-euc-tree}
\end{figure}

Figures \ref{4-3-graph} and \ref{5-3-graph} show the structure of the $(4,3)$
and $(5,3)$-graphs. For a general $(m,d)$-graph, each node has $m$ children,
and in general (assuming the parent has a neighbour on each side), the
leftmost and rightmost $r=m-d$ children will be \textquotedblleft
overlapping\textquotedblright\ and have a second parent, the first parent's
left or right neighbour. This is because $S_{i,j+d}=S_{i+1,j}$ for $%
j=0,\dots,r-1$ and otherwise, $S_{i,k}\neq S_{\ell ,j}$ since $m\leq 2d-1$.
This leaves $d-r$ non-overlapping children in the middle. The fact that two
nodes on the same level share a descendent if and only if they are adjacent
is the fundamental property that permits our analysis to work.

As before, we will partition the $(m,d)$-graph into $P$-subgraphs and $G$%
-subgraphs, but the definition of these two subgraphs will need to be
modified for this more general case. The $P$\emph{-subgraph} will begin with
a single node of weight $1$ at (relative) level 0. In the $(m,d)$-graph,
this node has $m=d+r$ children. We include in this $P$-subgraph all children
of this node, except the outer $r$ children on the left and on the right.
These inner, first level, children will always have weight $1$ (regardless
of the level of the original graph at which they begin). At the next level,
we consider again all children of these $d-r$ nodes, \textit{except} the
outer most $r$ right children of the right most node, and the outer most $r$
left children of the left most node. Note that some of these children will
have weight greater than $1$. We repeat this process for each lower level
with new $P$-subgraphs beginning on each level on these previously excluded
outer most nodes. We see that the outer most children of the $P$-subgraph
have weight $1$. Examples of $P$-subgraphs are given with red nodes in
Figures \ref{4-3-graph} and \ref{5-3-graph}. Notice that if $m=2d-1$ ($%
r=d-1) $, the $P$-subgraph is a single column of ones, as in the previous
section.

We define the $G$-subgraph to consist of the nodes between two adjacent $P$%
-subgraphs (not necessarily arising on the same level). As before, the $G$%
-subgraph has no nodes at relative (to the $P$-subgraphs) level 0. It will
have $r$ nodes at relative level $1$ of weight $2$. Examples of $G$%
-subgraphs are given with blue nodes in Figures \ref{4-3-graph} and \ref%
{5-3-graph}.

The generating function, $F_{k},$ of nodes of weight $k$ at level $n$ for
the $(m,d)$-graph, can again be written in terms of the generating functions
of the $G$-subgraphs and $P$-subgraphs. Indeed, we see that there is a
single $P$-subgraph starting at level $0$, $2r$ $P$-subgraphs starting at
level $1$ and, more generally, there are $2r$ $P$-subgraphs starting at
every level $n\geq 1$. Between each of pair of $P$-subgraphs there is a $G$%
-subgraph. Thus there are $2r$ $G$-subgraphs starting at every level $n\geq
1 $. This gives us the relationship 
\begin{equation}
F_{k}(x)=P_{k}(x)+\frac{2rx}{1-x}P_{k}(x)+\frac{2rx}{1-x}G_{k}(x)
\label{FPG general}
\end{equation}%
(which, of course, coincides with (\ref{FPG}) in the case $d=2$, $m=3$).

Having defined the $P$ and $G$-subgraphs, we now determine their generating
functions. First, consider a $P$-subgraph in the special case $r=d-1$. Then
the generating function is the same as before, namely $P_{1}(x)=\frac{1}{1-x}
$, $P_{k}(x)=0$ for $k\geq 2$.

If $r\neq d-1$, then the generating function is more complicated. The $P$%
-subgraph has a single node of weight $1$ at level $0$ and $d-r$ children at
level $1$ of weight $1$. These children can also be viewed as the starting
node of their own $P$-subgraph. Between each of these $d-r$ children there
is a $G$-subgraph. This gives us the relations 
\begin{align*}
P_{1}(x)& =1+(d-r)xP_{1}(x), \\
P_{k}(x)& =(d-r)xP_{k}(x)+(d-r-1)xG_{k}(x)\text{ for }k\geq 2.
\end{align*}%
Note these coincide with the equations given above in the special case $%
r=d-1 $ and simplify to 
\begin{equation}
P_{1}(x)=\frac{1}{1-(d-r)x},\text{ }P_{k}(x)=\frac{(d-r-1)x}{1-(d-r)x}%
G_{k}(x)\text{ for }k\geq 2.  \label{eq:Pk general 2}
\end{equation}

Now, consider the $G$-subgraph. As before, there is a relationship between
the $G$-subgraph and the Euclidean graph. To be more precise, there is a
relationship between the generating function for $G_{k}(x)$ and for $%
A_{k}(x) $. Consider a node in the Euclidean graph $\{a,b\}$ at level $n$
with children $\{a,a+b\}$ and $\{a+b,b\}$ at level $n+1$. Between these two
children there are $r$ nodes of weight $a+b$. Each of these nodes can be
thought of as the top node of an $(a+b)$ multiple of a $P$-subgraph. In
particular this means that the number of nodes of weight $k$ in one of these 
$(a+b)$ multiples of a $P$-subgraph is the same as the number of weight $%
k/(a+b)$ nodes in a $P$-subgraph. Between each of these $(a+b)$ multiples of 
$P$-subgraphs (of which there are $r$), there is a $(a+b)$ multiple of a $G$%
-subgraph, and there are $(r-1)$ such $G$-subgraphs. Similarly, the number
of nodes of weight $k$ in one of these $(a+b)$ multiples of a $G$-subgraph
is the same as the number of weight $k/(a+b)$ nodes in a $G$-subgraph. This
gives us the equations $G_{1}(x)=0$ and 
\begin{equation}
G_{k}(x)=\sum_{\ell \mid k,\text{ }\ell \neq 1}A_{\ell }(x)\left( rP_{k/\ell
}(x)+(r-1)G_{k/\ell }(x)\right) \text{ for }k\geq 2.  \label{eq:Gk general}
\end{equation}%
Observe that when $m=3,d=2$, this simplifies to $G_{k}(x)=\frac{1}{1-x}%
A_{k}(x)$, as we obtained before.

\subsubsection{The analytic extension of the entropy generating function for
the general case}

One of the main steps in proving Theorem \ref{thm:T} was to find a formula
for $F_{k}(x)$ in terms of only $A_{k}(x)$. Before doing this in the more
general case, we need to find an additional relationship.

\begin{lemma}
With the notation as above, we have 
\begin{equation*}
\left. \frac{\partial }{\partial s}\mathcal{F}(x,s)\right\vert _{s=1}=\frac{%
rx(m-1)(3x-1)^{2}}{(mx-1)^{2}(x-1)^{2}}\left. \frac{\partial }{\partial s}%
\mathcal{A}(x,s)\right\vert _{s=1}.
\end{equation*}
\end{lemma}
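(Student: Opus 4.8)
The plan is to push the three subgraph recursions (\ref{FPG general}), (\ref{eq:Pk general 2}) and (\ref{eq:Gk general}) through the $k^{s}$-weighting, obtain a closed (scalar) system relating $\mathcal{F}(x,s)$ and $\mathcal{A}(x,s)$, then differentiate in $s$ and set $s=1$. To that end I would introduce, in analogy with $\mathcal{F}$ and $\mathcal{A}$, the weighted generating functions $\mathcal{G}(x,s)=\sum_{k\ge 2}k^{s}G_{k}(x)$ and $\mathcal{P}(x,s)=\sum_{k\ge 2}k^{s}P_{k}(x)$; as for $\mathcal{F}$ and $\mathcal{A}$, for $x$ small the $s$-derivative may be computed term by term.

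First I would multiply (\ref{FPG general}) and (\ref{eq:Pk general 2}) by $k^{s}$ and sum over $k\ge 2$; these are linear, giving
\begin{align*}
\mathcal{F}(x,s)&=\Bigl(1+\frac{2rx}{1-x}\Bigr)\mathcal{P}(x,s)+\frac{2rx}{1-x}\,\mathcal{G}(x,s),\\
\mathcal{P}(x,s)&=\frac{(d-r-1)x}{1-(d-r)x}\,\mathcal{G}(x,s).
\end{align*}
The one step requiring care is (\ref{eq:Gk general}): multiplying by $k^{s}$, summing over $k\ge 2$, and writing $k=\ell j$ with $\ell\ge 2$ a divisor and $j\ge 1$, the weight splits as $k^{s}=\ell^{s}j^{s}$, so the divisor sum factors into a product, and (using $G_{1}=0$ to identify the $j=1$ contribution as $rP_{1}(x)$) one obtains
\begin{equation*}
\mathcal{G}(x,s)=\mathcal{A}(x,s)\bigl(rP_{1}(x)+r\,\mathcal{P}(x,s)+(r-1)\,\mathcal{G}(x,s)\bigr).
\end{equation*}
Eliminating $\mathcal{P}$ via the second relation and solving the resulting scalar equation for $\mathcal{G}$, then substituting back, gives
\begin{equation*}
\mathcal{F}(x,s)=\frac{\beta(x)\,rP_{1}(x)\,\mathcal{A}(x,s)}{1-\gamma(x)\,\mathcal{A}(x,s)},
\end{equation*}
where $\gamma(x)=r\frac{(d-r-1)x}{1-(d-r)x}+r-1$ and $\beta(x)=\bigl(1+\frac{2rx}{1-x}\bigr)\frac{(d-r-1)x}{1-(d-r)x}+\frac{2rx}{1-x}$ depend only on $x$.

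Because $\beta,\gamma,P_{1},r$ are independent of $s$, differentiating and using $\frac{\partial}{\partial s}\frac{\mathcal{A}}{1-\gamma\mathcal{A}}=\frac{1}{(1-\gamma\mathcal{A})^{2}}\frac{\partial\mathcal{A}}{\partial s}$ collapses the quotient rule to
\begin{equation*}
\frac{\partial}{\partial s}\mathcal{F}(x,s)=\frac{\beta(x)\,rP_{1}(x)}{\bigl(1-\gamma(x)\,\mathcal{A}(x,s)\bigr)^{2}}\,\frac{\partial}{\partial s}\mathcal{A}(x,s).
\end{equation*}
To evaluate at $s=1$ I would use the identity $\mathcal{A}(x,1)=\sum_{k\ge 2}kA_{k}(x)=\frac{2x}{1-3x}$. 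This comes from the Euclidean tree: its labels at level $n\ge 1$ have entries summing to $3^{n}$ (each node $\{a,b\}$ has children whose four entries sum to $3(a+b)$), and since each level-$(n+1)$ larger entry equals the entry-sum of its parent, the larger entries at level $n+1$ sum to $2\cdot 3^{n}$, i.e.\ to $2\cdot 3^{n-1}$ at level $n\ge 1$; summing the generating series gives $\frac{2x}{1-3x}$. Plugging $\mathcal{A}(x,1)=\frac{2x}{1-3x}$ into the displayed formula and simplifying — using $P_{1}(x)=\frac{1}{1-(d-r)x}$ and the identity $(d-r)+2r=m$, so that the numerator of $1-\gamma(x)\mathcal{A}(x,1)$ factors as $(1-x)(1-mx)$ and $\beta(x)P_{1}(x)=\frac{x(m-1)}{(1-(d-r)x)^{2}}$ — yields exactly
\begin{equation*}
\frac{\partial}{\partial s}\mathcal{F}(x,s)\Big|_{s=1}=\frac{rx(m-1)(3x-1)^{2}}{(mx-1)^{2}(x-1)^{2}}\,\frac{\partial}{\partial s}\mathcal{A}(x,s)\Big|_{s=1}.
\end{equation*}

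The main obstacle is the algebraic reconciliation in the last step: verifying $\mathcal{A}(x,1)=\frac{2x}{1-3x}$ and then checking that $\beta$, $\gamma$ and $P_{1}$ conspire to give precisely the stated rational factor. The collapse hinges on the arithmetic identity $(d-r)+2r=m$, which is exactly what makes $1-\gamma(x)\mathcal{A}(x,1)$ factor cleanly; keeping that relation visible is what turns an opaque rational-function computation into a short one. A secondary, routine point is to make the multiplicative splitting of the divisor sum in (\ref{eq:Gk general}) rigorous — in particular the $j=1$ boundary term — and to invoke the term-by-term differentiability noted in Section~\ref{sec:discussion} to legitimize differentiating the identity for $\mathcal{F}$ in $s$.
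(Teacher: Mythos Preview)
Your proof is correct and follows essentially the same route as the paper: define $\mathcal{G}$ and $\mathcal{P}$, lift the three recursions to $k^{s}$-weighted sums (with the divisor sum in (\ref{eq:Gk general}) factoring multiplicatively), solve the resulting scalar system for $\mathcal{F}$ in terms of $\mathcal{A}$, differentiate in $s$, and insert $\mathcal{A}(x,1)=\frac{2x}{1-3x}$. Your presentation is in fact slightly more streamlined than the paper's: you treat the cases $r=d-1$ and $r\neq d-1$ uniformly (the paper splits them because its auxiliary function $\alpha$ degenerates when $r=d-1$), and you supply a self-contained Euclidean-tree argument for $\mathcal{A}(x,1)=\frac{2x}{1-3x}$ where the paper cites \cite{az}.
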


\begin{proof}
Let%
\begin{equation*}
\mathcal{G}(x,s)=\sum_{k=2}^{\infty }k^{s}G_{k}(x)\text{ and }\mathcal{P}%
(x,s)=\sum_{k=2}^{\infty }k^{s}P_{k}(x).
\end{equation*}

First, assume that $m=2d-1,$ $r=d-1$. The definitions of $\mathcal{G}$ and $%
\mathcal{A}$, equation (\ref{eq:Gk general}) and the fact that $G_{1}(x)=0$
imply that 
\begin{align*}
\mathcal{G}(x,s)\mathcal{A}(x,s)& =\sum_{k=2,\ell =2}(k\ell )^{s}A_{\ell
}(x)G_{k}(x) \\
& =\sum_{k=1,\ell =2}(k\ell )^{s}A_{\ell }(x)G_{k}(x) \\
& =\sum_{n=2}^{\infty }\sum_{\ell |n,\ell \neq 1}n^{s}A_{\ell }(x)G_{n/\ell
}(x) \\
& =\sum_{n=2}^{\infty }n^{s}\left( \sum_{\ell |n,\ell \neq 1}A_{\ell
}(x)G_{n/\ell }(x)+\frac{rA_{\ell }(x)P_{n/\ell }(x)}{r-1}\right)
-\sum_{n=2}^{\infty }n^{s}\sum_{\ell |n,\ell \neq 1}\frac{rA_{\ell
}P_{n/\ell }(x)}{r-1}
\end{align*}%
Since $P_{1}(x)=1/(1-x)$ and $P_{k}(x)=0$ for all $k\geq 2,$ this simplifies
to 
\begin{eqnarray*}
\mathcal{G}(x,s)\mathcal{A}(x,s) &=&\frac{1}{r-1}\sum_{n=2}^{\infty
}n^{s}G_{n}(x)-\frac{r}{(r-1)(1-x)}\sum_{n=2}^{\infty }n^{s}A_{n} \\
&=&\frac{1}{r-1}\mathcal{G}_{(}x,s)-\frac{r}{(r-1)(1-x)}\mathcal{A}(x,s).
\end{eqnarray*}%
Solving for $\mathcal{G}(x,s)$ gives 
\begin{equation*}
\mathcal{G}(x,s)=\frac{r\mathcal{A}(x,s)}{(1-x)(1-(r-1)\mathcal{A}(x,s))}
\end{equation*}%
and therefore from equation (\ref{FPG general}) we deduce that 
\begin{equation*}
\mathcal{F}(x,s)=\sum_{k=2}^{\infty }k^{s}G_{k}(x)\frac{2rx}{1-x}=\frac{%
2r^{2}x\mathcal{A}(x,s)}{(1-x)^{2}(1-(r-1)\mathcal{A}(x,s))}.
\end{equation*}

It follows from \cite{az} that $\mathcal{A}(x,1)=\frac{2x}{1-3x}$, hence a
straightforward calculation gives%
\begin{equation*}
\left. \frac{\partial }{\partial s}\mathcal{F}(x,s)\right\vert _{s=1}=\frac{%
2r^{2}x(3x-1)^{2}}{(1-x)^{2}(mx-1)^{2}}\left. \frac{\partial }{\partial s}%
\mathcal{A}(x,s)\right\vert _{s=1},
\end{equation*}%
which is the desired result in this special case.

In a similar fashion, one can verify that if $r\neq d-1$, then%
\begin{eqnarray*}
\mathcal{P}(x,s) &=&\mathcal{G}(x,s)\beta (x)\text{ and} \\
\mathcal{P}(x,s)\mathcal{A}(x,s) &=&\alpha (x)\mathcal{G}(x,s)-\alpha
(x)rP_{1}(x)\mathcal{A}(x,s)
\end{eqnarray*}%
where 
\begin{equation*}
\alpha (x)=\frac{(d-r-1)x}{(d-2r)x+r-1}\text{ and }\beta (x)=\frac{(d-r-1)x}{%
1-(d-r)x}.
\end{equation*}%
It follows from this that%
\begin{equation*}
\mathcal{F}(x,s)=\frac{\alpha (x)rP_{1}(x)\gamma (x)\mathcal{A}(x,s)}{\alpha
(x)/\beta (x)-\mathcal{A}(x,s)}
\end{equation*}%
where%
\begin{equation*}
\gamma (x)=1+\frac{2rx}{1-x}+\frac{2rx}{\beta (x)(1-x)}.
\end{equation*}%
Taking partial derivatives and evaluating at $s=1$ gives the claimed result.
\end{proof}

\begin{theorem}
\label{thm:T general} Let $m$ and $d$ be integers with $2\leq d<m\leq 2d-1$.
Let $H_{m,d}(x)=\sum_{n=1}^{\infty }h_{m,d}(n)x^{n}$ be the generating
function for the entropies of the levels of the $(m,d)$-graph. There exists
a function $T_{m,d}(x)$, analytic on a disk of radius $m$ about $0$, such
that 
\begin{equation*}
H_{m,d}(x)=\frac{x}{(x-1)^{2}}T_{m,d}(x).
\end{equation*}
\end{theorem}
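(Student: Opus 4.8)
The plan is to mimic the structure of the proof of Theorem \ref{thm:T}, now feeding in the more general Lemma just proved in place of equation (\ref{DiffFA}). Starting from the simplified entropy formula (\ref{hsimp}), which for the uniform $(m,d)$-measure reads
\[
h_{m,d}(n)=n\log_2 m-m^{-n}\sum_{k=1}^{\infty}f_{m,d}(n,k)k\log_2 k,
\]
I would sum against $x^n$ to get
\[
H_{m,d}(x)=\frac{x}{(1-x)^2}\log_2 m-\frac{1}{\ln 2}\left.\frac{\partial}{\partial s}\mathcal{F}(x/m,s)\right|_{s=1},
\]
using that the term-by-term differentiation of $\mathcal{F}$ in $s$ is justified for small $x$ (as remarked in Section 2) and that $\sum_n n x^n=x/(1-x)^2$. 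The factor $m^{-n}$ is absorbed by evaluating $\mathcal F$ at $x/m$.

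Next I would substitute the Lemma. Replacing $x$ by $x/m$ in the identity
\[
\left.\frac{\partial}{\partial s}\mathcal{F}(x,s)\right|_{s=1}=\frac{rx(m-1)(3x-1)^2}{(mx-1)^2(x-1)^2}\left.\frac{\partial}{\partial s}\mathcal{A}(x,s)\right|_{s=1}
\]
turns the coefficient into a rational function of $x$ whose only poles inside $|x|<m$ are at $x=1$ (from the factor $(x/m-1)^2=(x-m)^2/m^2$ the pole is at $x=m$, on the boundary, so harmless; the genuine interior pole is the $(x-1)^2$ in the denominator). Then, exactly as in the $d=2$, $m=3$ case, I would expand $\frac{\partial}{\partial s}\mathcal{A}(x/m,s)|_{s=1}=\ln 2\sum_{k\ge 2}k\log_2 k\,A_k(x/m)=\ln 2\sum_{n\ge 1}(x/m)^n\sum_{k>i,\gcd(i,k)=1,\,e(k,i)=n}k\log_2 k$ and pull out a factor $(1-3x)^2$ (equivalently $(1-3x/m)^2$ after the substitution) to define a power series $L$ with bounded coefficients. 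By \cite{gkt}, $L$ has radius of convergence at least $1$, so after the substitution $x\mapsto x/m$ it converges on $|x|<m$; collecting the remaining rational prefactor of the form $x/(x-1)^2$ times an analytic function, I would set $T_{m,d}(x)$ equal to that analytic function and conclude $H_{m,d}(x)=\frac{x}{(x-1)^2}T_{m,d}(x)$ with $T_{m,d}$ analytic on $|x|<m$.

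The routine-but-fiddly part is the bookkeeping: in the general case the prefactor from the Lemma is $\frac{rx(m-1)(3x-1)^2}{(mx-1)^2(x-1)^2}$ rather than the clean $\frac{2x}{(1-x)^2}$, so one must check that after writing $H_{m,d}(x)=\frac{x}{(1-x)^2}\log_2 m-(\text{prefactor at }x/m)\cdot\sum k\log_2 k\,A_k(x/m)$ and factoring out $\frac{x}{(x-1)^2}$, the leftover really is analytic on the full disk of radius $m$ — in particular that the $(mx-1)^2$ in the denominator of the prefactor, which after the substitution becomes $(x-m)^2$, does not create a pole inside $|x|<m$ and that no spurious singularity at $x=1$ survives beyond the allowed double pole. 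I expect the main obstacle to be exactly this analyticity verification: one needs the explicit form $\mathcal{A}(x,1)=\frac{2x}{1-3x}$ (from \cite{az}), used already in the Lemma's proof, together with the $\ell(n)$ bound from \cite{gkt}, to see that all the apparent poles of the rational coefficient except the prescribed $(x-1)^{-2}$ are cancelled by zeros of the $\mathcal{A}$-factor or lie on $|x|=m$, so that $T_{m,d}(x):=\frac{(x-1)^2}{x}H_{m,d}(x)$ extends analytically across them.
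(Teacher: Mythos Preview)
Your proposal is correct and follows essentially the same route as the paper: start from (\ref{hsimp}), sum to get $H_{m,d}(x)=\frac{x}{(1-x)^2}\log_2 m-\frac{1}{\ln 2}\partial_s\mathcal{F}(x/m,s)|_{s=1}$, substitute the Lemma, expand $\partial_s\mathcal{A}(x/m,s)|_{s=1}$ as $\ln 2\sum_{k\ge 2}k\log_2 kA_k(x/m)$, absorb the $(1-3x/m)^2$ factor into the definition of $L$, and invoke \cite{gkt} to see $L$ has radius of convergence at least $1$. The paper packages the remaining rational factor as $R(x)=\frac{r(m-1)m}{(m-x)^2}$ and sets $T_{m,d}(x)=\log_2 m-R(x)L(x/m)$, analytic on $|x|<m$.

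Your final paragraph is slightly over-cautious: once you substitute $x\mapsto x/m$ in the Lemma, the factor $(mx-1)^2$ becomes $(x-1)^2$ and the factor $(x-1)^2$ becomes $(x-m)^2/m^2$, so the pole structure is immediate and no cancellation by zeros of $\mathcal{A}$ is needed; the only interior singularity of $H_{m,d}$ is the explicit double pole at $x=1$, and $R(x)$ is manifestly analytic on $|x|<m$. The formula $\mathcal{A}(x,1)=\frac{2x}{1-3x}$ is not needed again here---its work was done inside the Lemma.
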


We can generalize Corollary \ref{cor:T} in the obvious way to give

\begin{cor}
\label{cor:T general} With $T_{m,d}(x)$ defined as above, we have 
\begin{equation*}
\mathfrak{H}_{m,d}=\frac{T_{m,d}(1)}{\log _{2}d}.
\end{equation*}
\end{cor}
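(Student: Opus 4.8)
\textbf{Proof proposal for Corollary \ref{cor:T general}.}

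The plan is to mimic exactly the argument used to deduce Corollary \ref{cor:T} from Theorem \ref{thm:T}, replacing the single factor $\log_2 2 = 1$ that appeared there with $\log_2 d$. First I would set $U = \{x : |x| < m\}$ and invoke Theorem \ref{thm:T general}, which provides $T = T_{m,d}$ analytic on $U$; consequently $H = H_{m,d}$ is analytic on $U \setminus \{1\}$, with at worst a double pole at $x = 1$. Since $T$ is analytic on a neighbourhood of $1$ (indeed on the disk of radius $m - 1 > 0$ about $1$, as $m \ge 3$), I would expand $T$ in a Taylor series at $1$ and divide by $(x-1)^2$ to get a Laurent expansion
\begin{equation*}
H(x) = x\left(\frac{T(1)}{(x-1)^2} + \frac{T'(1)}{x-1} + \sum_{n=0}^{\infty} c(n)(x-1)^n\right),
\end{equation*}
valid on a punctured neighbourhood of $1$, and then re-expand the analytic tail $\sum c(n)(x-1)^n$ as a power series $\sum d(n) x^n$ in $x$.

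The key step is then to observe, exactly as in the proof of Corollary \ref{cor:T}, that since $H(x)$ is analytic on all of $U \setminus \{1\}$ and the principal part $x(T(1)(x-1)^{-2} + T'(1)(x-1)^{-1})$ is analytic on $\mathbb{C}\setminus\{1\}$, the difference $x\sum_{n} d(n) x^n$ extends analytically to the full disk $U$; hence its coefficients satisfy $d(n) \to 0$ as $n \to \infty$ (the power series has radius of convergence at least $m > 1$). Reading off the coefficient of $x^n$ from the displayed identity gives $h_{m,d}(n) = T(1)\, n - T'(1) + d(n-1)$ for $n$ large. One should double-check the precise constants here by the same bookkeeping as before: $x/(x-1)^2 = \sum_{n\ge 1} n x^n$ contributes the $T(1) n$ term, $x/(x-1) = -\sum_{n\ge 1} x^n$ contributes $-T'(1)$, and the shift by $x$ turns $d(n)$ into $d(n-1)$.

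Finally, dividing by $n \log_2 d$ and passing to the limit,
\begin{equation*}
\mathfrak{H}_{m,d} = \lim_{n\to\infty} \frac{h_{m,d}(n)}{n \log_2 d} = \lim_{n\to\infty} \frac{T(1) n - T'(1) + d(n-1)}{n \log_2 d} = \frac{T_{m,d}(1)}{\log_2 d},
\end{equation*}
since $T'(1)$ is a fixed constant and $d(n-1) \to 0$. I do not anticipate a genuine obstacle here; the only point requiring minor care is confirming that the definition \eqref{entropymu} of $\mathfrak{H}_{m,d}$ carries the factor $\log_2 d$ in the denominator (it does), which is the sole difference from the $d = 2$ case where that factor was $1$ and hence invisible. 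Everything else is a verbatim repetition of the $(3,2)$ argument, now legitimate because Theorem \ref{thm:T general} supplies the analyticity of $T_{m,d}$ on a disk of radius $m > 1$ about the origin.
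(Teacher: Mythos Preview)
Your proposal is correct and is precisely the generalization the paper has in mind: the paper does not give a separate proof of Corollary \ref{cor:T general} but simply states that ``We can generalize Corollary \ref{cor:T} in the obvious way,'' and your argument carries out that obvious generalization verbatim, with $U=\{|x|<m\}$ in place of $\{|x|<3\}$ and the extra factor $\log_2 d$ in the final limit.
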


\begin{proof}[Proof of Theorem \protect\ref{thm:T general}]
Again, we begin by recalling that 
\begin{align*}
H(x) =H_{m,d}(x)& =\sum_{n=0}^{\infty }\left( n\log
_{2}m-m^{-n}\sum_{k=1}^{\infty }f(n,k)k\log _{2}k\right) x^{n} \\
&=\frac{x}{(1-x)^{2}}\log _{2}m-\frac{1}{\ln 2}\left. \frac{\partial }{%
\partial s}\mathcal{F}(x/m,s)\right\vert _{s=1}.
\end{align*}

Put 
\begin{equation*}
R(x)=\frac{r(m-1)m}{(m-x)^{2}}.
\end{equation*}%
Using the formula we obtained for $\frac{\partial }{\partial s}\mathcal{F}%
(x,s)$ in the previous lemma and differentiating $\mathcal{A}(x/m,s)$
term-by-term, gives%
\begin{eqnarray*}
H(x) &=&\frac{x}{(1-x)^{2}}\left( \log _{2}m-\frac{1}{\ln 2}R(x)\frac{%
(m-3x)^{2}}{m^{2}}\left. \frac{\partial }{\partial s}\mathcal{A}%
(x/m,s)\right\vert _{s=1}\right) \\
&=&\frac{x}{(1-x)^{2}}\left( \log _{2}m-R(x)\frac{(m-3x)^{2}}{m^{2}}%
\sum_{k=2}^{\infty }k\log _{2}kA_{k}(x/m)\right) .
\end{eqnarray*}%
As in the previous theorem, set%
\begin{equation*}
L(x)=\sum_{n=1}^{\infty }\ell (n)x^{n}=(1-3x)^{2}\sum_{n=1}^{\infty
}x^{n}\sum_{\substack{ k>i,\gcd (i,k)=1  \\ e(k,i)=n}}\log _{2}k,
\end{equation*}%
whence 
\begin{equation*}
H(x)=\frac{x}{(1-x)^{2}}\left( \log _{2}m-R(x)L(\frac{x}{m})\right) .
\end{equation*}

As $L(x)$ is analytic on the unit disk, $H(x)$ has an analytic continuation
to $|x|<m$, as claimed. Letting $T(x)=\log _{2}m-R(x)L(x/m)$ completes the
proof.
\end{proof}

In Section \ref{sec:comp}, we will use this generating function to extract
an entropy estimate and an error bound for the uniform $(m,d)$-measures and
give explicit numerical results for the case $2\leq d<m\leq 10$.

\section{Bounds for the entropy for the non-uniform $(m,d)$-measures}

In this section we consider the non-uniform $(m,d)$-measures. Recall that
the Garsia entropy is given by (see (\ref{entropymu}))%
\begin{equation*}
\mathfrak{H}_{\mu }=\lim_{n\rightarrow \infty }\frac{h_{\mu }(n)}{n\log _{2}d%
}=\lim_{n\rightarrow \infty }\frac{-\sum_{p\in W_{n}}p\log _{2}p}{n\log _{2}d%
}.
\end{equation*}

The goal of this section is to prove

\begin{proposition}
\label{biased}If $\mu $ is the $(m,d)$-measure associated with probabilities 
$\{p_{i}\}_{i=0}^{m-1}$, then%
\begin{equation*}
\left( \log _{2}d\right) \mathfrak{H}_{\mu }\in \left[ -%
\sum_{i=0}^{m-1}p_{i}\log
_{2}p_{i}-\sum_{i=0}^{m-d-1}(p_{i}+p_{d+i}),-\sum_{i=0}^{m-1}p_{i}\log
_{2}p_{i}\right] .
\end{equation*}
\end{proposition}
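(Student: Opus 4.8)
The plan is to recognise $h_\mu(n)$ as an honest Shannon entropy and then perform a single‑step chain‑rule computation in which the defect term turns out to be exactly the probability of stepping onto an overlapping node.

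Let $\sigma_1,\sigma_2,\dots$ be i.i.d.\ random digits with $\mathbb P(\sigma_k=j)=p_j$, write $H(p)=-\sum_{j=0}^{m-1}p_j\log_2 p_j$ for their common entropy, and for $n\ge 0$ let $Z_n$ denote the node of $g_n$ that contains $S_{\sigma_1\sigma_2\cdots\sigma_n}(0)$. By the definition of the weights $w_z$, $\mathbb P(Z_n=z)=w_z$, so in view of (\ref{hmu}) the random variable $Z_n$ has Shannon entropy $h_\mu(n)$, with $h_\mu(0)=0$. Now $\sigma_n$ is independent of $Z_{n-1}$, the map $(Z_{n-1},\sigma_n)\mapsto Z_n$ is deterministic, and a node of $g_n$ is the $j$‑th child of at most one node of $g_{n-1}$ (every $S_\sigma$ with $|\sigma|=n-1$ is affine of slope $d^{-(n-1)}$, hence determined by $S_\sigma(0)$), so $(\sigma_n,Z_n)$ determines $Z_{n-1}$. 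The chain rule then gives
\[
h_\mu(n-1)+H(p)=H(Z_{n-1},\sigma_n)=H(Z_n)+H(Z_{n-1},\sigma_n\mid Z_n)=h_\mu(n)+H(\sigma_n\mid Z_n).
\]
Summing over $1\le n\le N$ and dividing by $N$,
\[
\frac{h_\mu(N)}{N}=H(p)-\frac1N\sum_{n=1}^N H(\sigma_n\mid Z_n).
\]
Because conditional entropy is non‑negative, this alone gives $h_\mu(N)/N\le H(p)$ for every $N$, and passing to the limit in (\ref{entropymu}) yields the upper bound $(\log_2 d)\,\mathfrak H_\mu\le-\sum_{i}p_i\log_2 p_i$.

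For the lower bound it is enough to prove $H(\sigma_n\mid Z_n)\le\sum_{i=0}^{m-d-1}(p_i+p_{d+i})$ for every $n$, and this is where the structure of the $(m,d)$‑graph recorded in Section~\ref{sec:discussion} enters. Since two nodes of a given level share a descendant only if they are adjacent, and since the only coincidences among compositions are the identities $S_{i,j+d}=S_{i+1,j}$ for $0\le j\le r-1$, where $r=m-d$ (this is exactly where $m\le 2d-1$ is used), every node of $g_n$ has one or two parents; moreover a two‑parent node $z'$ occupies position $j$ below one parent and position $j+d$ below the other, for some $0\le j\le r-1$. Conditioning on $Z_n=z'$: if $z'$ has a unique parent then $\sigma_n$ is forced, so $H(\sigma_n\mid Z_n=z')=0$; if $z'$ has two parents then $\sigma_n$ can take only the two values $j$ and $j+d$, so $H(\sigma_n\mid Z_n=z')\le\log_2 2=1$. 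Averaging over $z'$, and using that in the two‑parent case the realised digit $\sigma_n$ is one of $j,j+d$ and so lies in $\{0,\dots,r-1\}\cup\{d,\dots,d+r-1\}$, we obtain
\[
H(\sigma_n\mid Z_n)\le\mathbb P\bigl(Z_n\text{ has two parents}\bigr)\le\mathbb P\bigl(\sigma_n\in\{0,\dots,r-1\}\cup\{d,\dots,d+r-1\}\bigr)=\sum_{i=0}^{m-d-1}(p_i+p_{d+i}).
\]
Substituting this into the formula for $h_\mu(N)/N$ and letting $N\to\infty$ gives the lower bound $(\log_2 d)\,\mathfrak H_\mu\ge-\sum_i p_i\log_2 p_i-\sum_{i=0}^{m-d-1}(p_i+p_{d+i})$.

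Essentially everything here is routine: the chain‑rule manipulation and the estimate ``binary entropy $\le 1$''. The one point that needs care is the combinatorial input --- that a level‑$n$ node has at most two parents, and that a two‑parent node is reachable only through an overlap digit; this is precisely the fundamental property isolated in Section~\ref{sec:discussion}, so the only real work is to translate it into the probabilistic inequality $\mathbb P(Z_n\text{ has two parents})\le\sum_{i=0}^{m-d-1}(p_i+p_{d+i})$ used above.
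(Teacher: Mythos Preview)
Your proof is correct. Both arguments bound the increment $\Delta h_n=h_\mu(n)-h_\mu(n-1)$, and in fact they compute the \emph{same} defect term: the paper's expression $\sum_{(p,q)\text{ adjacent}}\sum_{i}(p_ip+p_{d+i}q)\,D\bigl(\tfrac{p_ip}{p_ip+p_{d+i}q}\bigr)$ is exactly $-H(\sigma_n\mid Z_n)$, since $D(x)=-H_2(x)$ is minus the binary entropy and $(p_ip+p_{d+i}q)$ is the weight of the corresponding two-parent node. The difference is one of packaging. The paper partitions $W_n$ into one-parent and two-parent nodes and carries out a direct algebraic reduction spanning a page before the binary-entropy function emerges; you instead recognise $h_\mu(n)=H(Z_n)$ at the outset and obtain $\Delta h_n=H(p)-H(\sigma_n\mid Z_n)$ in one line from the chain rule, using the bijection $(Z_{n-1},\sigma_n)\leftrightarrow(Z_n,\sigma_n)$. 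Your bound $H(\sigma_n\mid Z_n=z')\le 1$ at two-parent nodes is the paper's bound $D\ge -1$, and your inclusion $\{Z_n\text{ has two parents}\}\subset\{\sigma_n\in\{0,\dots,r-1\}\cup\{d,\dots,m-1\}\}$ is the paper's computation of $b_n$ (without the boundary correction, which you do not need since you only want an inequality). What the paper's longer route buys is an \emph{exact} expression for $\Delta h_n$, which is then exploited in the example of Section~\ref{sec:comp} to sharpen the lower bound once one knows a priori that adjacent weights satisfy $p/(p+q)\in[\tfrac{1}{t+1},\tfrac{t}{t+1}]$; your argument would recover this refinement just as easily by bounding $H(\sigma_n\mid Z_n=z')$ by $\max_x H_2(x)$ over the restricted range rather than by~$1$.
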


\begin{proof}
Set $\triangle h_{n}:=h_{\mu }(n)-h_{\mu }(n-1)$ for $n\in \mathbb{N}$
(putting $h(0)=0$), so that%
\begin{equation*}
\mathfrak{H}_{\mu }=\lim_{n\rightarrow \infty }\frac{\triangle
h_{1}+\triangle h_{2}+\dots+\triangle h_{n}}{n\log _{2}d}.
\end{equation*}%
Bounds for $\triangle h_{n}$ will then give bounds for the entropy. By
definition, $\triangle h_{n}=\sum_{p\in W_{n}}(-p\log_{2}p)-\sum_{p\in
W_{n-1}}(-p\log _{2}p)$. Note that each node from level $n$ comes from one
or two nodes from level $n-1$. We partition $W_{n}$ accordingly into $I_{n}$
for those nodes coming from one node at level $n-1$, and $J_{n}$ for those
nodes coming from two. It is worth noting that the left most $m-d$ and right
most $m-d$ nodes at level  $n$ are in $I_n$ and not $J_n$. 
With this notation, 
\begin{equation*}
\triangle h_{n}=\sum_{p\in W_{n-1}}p\log _{2}p-\sum_{p\in I_{n}}p\log
_{2}p-\sum_{p\in J_{n}}p\log _{2}p.
\end{equation*}

Using the fact that $\sum_{i=0}^{m-1}p_{i}=1$, we can partition the first
term to pair with the last two to give: 
\begin{eqnarray}
\triangle h(n) &=& \sum_{p\in W_{n-1}} \sum_{i=0}^{m-1} p_i p\log
_{2}p-\sum_{p\in I_{n}}p\log_{2}p-\sum_{p\in J_{n}}p\log _{2}p  \notag \\
&=& \left( \sum_{p\in W_{n-1}}\sum_{i=m-d}^{d-1} p_i p\log_{2}p -\sum_{p\in
I_{n}}p\log _{2}p\right)  \label{eq:star} \\
&& + \left( \sum_{p\in W_{n-1}} \sum_{i=0}^{m-d-1} (p_i + p_{i+d})
p\log_{2}p-\sum_{p\in J_{n}}p\log _{2}p\right) .  \notag
\end{eqnarray}

With the exception of the right and left most $m-d$ nodes, each node in $%
I_{n}$ is obtained by multiplying a unique node in $W_{n-1}$ by some $p_{i}$
($m-d\leq i\leq d-1$). The left and right most $m-d$ nodes result from
multiplying $p_{0}^{n-1}$ by some $p_{i}$ for $0\leq i\leq m-d-1$, (for the
left most) and multiplying $p_{m-1}^{n-1}$ by $p_{i}$ for some $p_{i}$ for $%
d \leq i\leq m-1$, (for the right most). Thus using the fact that $%
\sum_{p\in W_{n-1}}p=1,$ the first term of equation \eqref{eq:star}
simplifies to 
\begin{eqnarray*}
\text{first term} &=&\sum_{p\in W_{n-1}}\sum_{i=m-d}^{d-1}p_{i}p\log
_{2}p-\sum_{p\in W_{n-1}}\sum_{i=m-d}^{d-1}p_{i}p\log _{2}p_{i}p \\
&&-\sum_{i=0}^{m-d-1}p_{i}p_{0}^{n-1}\log _{2}p_{i}p_{0}^{n-1}
  -\sum_{i=d}^{m-1}p_{i}p_{m-1}^{n-1}\log_{2}p_{i}p_{m-1}^{n-1} \\
&=&-\sum_{p\in W_{n-1}}\sum_{i=m-d}^{d-1}p_{i}p\log _{2}p_{i} \\
&&-\sum_{i=0}^{m-d-1}p_{i}p_{0}^{n-1}\log _{2}p_{i}p_{0}^{n-1}
  -\sum_{i=d}^{m-1}p_{i}p_{m-1}^{n-1}\log_{2}p_{i}p_{m-1}^{n-1} \\
&= & -\sum_{i=m-d}^{d-1}p_{i}\log _{2}p_{i}
     -\sum_{i=0}^{m-d-1}p_{i}p_{0}^{n-1}\log _{2}p_{i}p_{0}^{n-1}
     -\sum_{i=d}^{m-1}p_{i}p_{m-1}^{n-1}\log_{2}p_{i}p_{m-1}^{n-1} \\
\end{eqnarray*}

To deal with the second term, observe that each node in $J_{n}$ comes from
two adjacent nodes from level $n-1$, thus we can rewrite the second term of
equation \eqref{eq:star} as 
\begin{eqnarray}
\text{second term} 
& =&
\sum_{i=0}^{m-d-1}p_{i}p_{0}^{n-1}\log_{2}p_{0}^{n-1} + \sum_{i=d}^{m-1}
p_{i}p_{m-1}^{n-1}\log_{2}p_{m-1}^{n-1}  \notag \\
&& +\sum_{\substack{ (p,q)\text{ adjacent}  \\ \text{in level }n-1}}%
\sum_{i=0}^{m-d-1}p_{i}p\log _{2}p+p_{d+i}q\log _{2}q  \label{eq:star2} \\
&& -\sum_{\substack{ (p,q)\text{ adjacent}  \\ \text{in level }n-1}}%
\sum_{i=0}^{m-d-1}(pp_{i}+qp_{d+i})\log _{2}(pp_{i}+qp_{d+i})  \notag
\end{eqnarray}

We concentrate on the last two terms of equation \eqref{eq:star2}. First,
write that sum as 
\begin{eqnarray*}
\text{last two terms} &=&\sum_{\substack{ (p,q)\text{ adjacent}  \\ \text{in
level }n-1}}\sum_{i=0}^{m-d-1}\left( p_{i}p\log _{2}p+p_{d+i}q\log
_{2}q\right) \\
&&-\sum_{\substack{ (p,q)\text{ adjacent}  \\ \text{in level }n-1}}%
\sum_{i=0}^{m-d-1}(pp_{i}+qp_{d+i})\log _{2}(pp_{i}+qp_{d+i}) \\
&=&\sum_{\substack{ (p,q)\text{ adjacent}  \\ \text{in level }n-1}}%
\sum_{i=0}^{m-d-1} p_{i}p\log _{2}p_{i}p+p_{d+i}q\log _{2}p_{d+i}q \\
&&-\sum_{\substack{ (p,q)\text{ adjacent}  \\ \text{in level }n-1}}%
\sum_{i=0}^{m-d-1}(pp_{i}+qp_{d+i})\log _{2}(pp_{i}+qp_{d+i}) \\
&&-\sum_{\substack{ (p,q)\text{ adjacent}  \\ \text{in level }n-1}}%
\sum_{i=0}^{m-d-1}\left( p_{i}p\log _{2}p_{i}+p_{d+i}q\log _{2}p_{d+i}\right)
\\
&=&\sum_{\substack{ (p,q)\text{ adjacent}  \\ \text{in level }n-1}}%
\sum_{i=0}^{m-d-1}\left( p_{i}p\log _{2}\frac{p_{i}p}{p_{i}p+p_{d+i}q}%
+p_{d+i}q\log _{2}\frac{p_{d+i}q}{p_{i}p+p_{d+i}q}\right) \\
&&-\sum_{\substack{ (p,q)\text{ adjacent}  \\ \text{in level }n-1}}%
\sum_{i=0}^{m-d-1}(p_{i}p\log _{2}p_{i}+p_{d+i}q\log _{2}p_{d+i}).
\end{eqnarray*}%
If we let $D(x)=x\log _{2}x+(1-x)\log _{2}(1-x)$ for $x\in (0,1)$, and the
fact that every node appears twice in the sum over $(p,q)$  adjacent nodes
    at level $n-1$, except the first and last, then it is straightforward to
    check this simplifies to 
\begin{eqnarray*}
\text{last two terms} &=&\sum_{\substack{ (p,q)\text{ adjacent}  \\ \text{in
level }n-1}}\sum_{i=0}^{m-d-1}(p_{i}p+p_{d+i}q)D\left( \frac{p_{i}p}{%
p_{i}p+p_{d+i}q}\right) \\
&&-\sum_{\substack{ (p,q)\text{ adjacent}  \\ \text{in level }n-1}}%
\sum_{i=0}^{m-d-1}(p_{i}p\log _{2}p_{i}+p_{d+i}q\log _{2}p_{d+i}) \\
&=&\sum_{\substack{ (p,q)\text{ adjacent}  \\ \text{in level }n-1}}%
\sum_{i=0}^{m-d-1}(p_{i}p+p_{d+i}q)D\left( \frac{p_{i}p}{p_{i}p+p_{d+i}q}%
\right) \\
&&-\sum_{i=0}^{m-d-1}p_{i}\log _{2}p_{i}-\sum_{i=d}^{m-1} p_{i}\log _{2}p_{i}
\\
&&+\sum_{i=0}^{m-d-1}p_{0}^{n-1}p_{i}\log_{2}p_{i}+\sum_{i=d}^{m-1}
p_{m-1}^{n-1}p_{i}\log_{2}p_{i}.
\end{eqnarray*}

Putting the above together, we see that 
\begin{eqnarray*}
\triangle h_{n} &= & -\sum_{i=m-d}^{d-1}p_{i}\log _{2}p_{i}
-\sum_{i=0}^{m-d-1}p_{i}p_{0}^{n-1}\log _{2}p_{i}p_{0}^{n-1}
-\sum_{i=d}^{m-1}p_{i}p_{m-1}^{n-1}\log_{2}p_{i}p_{m-1}^{n-1} \\
&& + \sum_{i=0}^{m-d-1}p_{i}p_{0}^{n-1}\log_{2}p_{0}^{n-1} +
\sum_{i=d}^{m-1} p_{i}p_{m-1}^{n-1}\log_{2}p_{m-1}^{n-1}  \notag \\
&&+ \sum_{\substack{ (p,q)\text{ adjacent}  \\ \text{in level }n-1}}%
\sum_{i=0}^{m-d-1}(p_{i}p+p_{d+i}q)D\left( \frac{p_{i}p}{p_{i}p+p_{d+i}q}%
\right) \\
&&-\sum_{i=0}^{m-d-1}p_{i}\log _{2}p_{i}-\sum_{i=d}^{m-1} p_{i}\log
_{2}p_{i} +\sum_{i=0}^{m-d-1}p_{0}^{n-1}p_{i}\log_{2}p_{i}+\sum_{i=d}^{m-1}
p_{m-1}^{n-1}p_{i}\log_{2}p_{i}. \\
&= & -\sum_{i=m-d}^{d-1}p_{i}\log _{2}p_{i} -\sum_{i=0}^{m-d-1}p_{i}\log
_{2}p_{i}-\sum_{i=d}^{m-1} p_{i}\log _{2}p_{i} \\
&& -\sum_{i=0}^{m-d-1}p_{i}p_{0}^{n-1} \left ( \log _{2}p_{i}p_{0}^{n-1} -
\log_2 p_0^{n-1} - \log_2 p_i \right) \\
&& -\sum_{i=d}^{m-1}p_{i}p_{m-1}^{n-1} \left ( \log _{2}p_{i}p_{m-1}^{n-1} -
\log_2 p_m-1^{n-1} - \log_2 p_i \right) \\
&&+ \sum_{\substack{ (p,q)\text{ adjacent}  \\ \text{in level }n-1}}%
\sum_{i=0}^{m-d-1}(p_{i}p+p_{d+i}q)D\left( \frac{p_{i}p}{p_{i}p+p_{d+i}q}%
\right) \\
&= & -\sum_{i=0}^{m-1}p_{i}\log _{2}p_{i} + \sum_{\substack{ (p,q)\text{
adjacent}  \\ \text{in level }n-1}}\sum_{i=0}^{m-d-1}(p_{i}p+p_{d+i}q)D%
\left( \frac{p_{i}p}{p_{i}p+p_{d+i}q}\right) 
\end{eqnarray*}%
Let 
\begin{equation*}
a :=-\sum_{i=0}^{m-1}p_{i}\log _{2}p_{i}
\end{equation*}%
\newline
As before, every node appears twice in the sum over $(p,q)$ adjacent nodes
at level $n-1$, except the first and last, hence
\begin{eqnarray*}
\sum_{\substack{ (p,q)\text{ adjacent}  \\ \text{in level }n-1}}%
\sum_{i=0}^{m-d-1}(p_{i}p+p_{d+i}q) &=&\sum_{p\in
W_{n-1}}\sum_{i=0}^{m-d-1}(p_{i}+p_{d+i})p-%
\sum_{i=0}^{m-d-1}p_{i}p_{0}^{n-1}-\sum_{i=d}^{m-1}p_{i}p_{m-1}^{n-1} \\
&=&\sum_{i=0}^{m-d-1}(p_{i}+p_{d+i})-\sum_{i=0}^{m-d-1}p_{i}p_{0}^{n-1}-%
\sum_{i=d}^{m-1}p_{i}p_{m-1}^{n-1} \\
& := &b_{n}
\end{eqnarray*}

Since the range of the function $D$ is the interval $[-1,0]$, it follows that 
\begin{equation*}
\triangle h_{n}\in \lbrack a-b_{n},a]
\end{equation*}%
Of course, $a = -\sum_{i=0}^{m-1}p_{i}\log _{2}p_{i}$ and $b_{n}\rightarrow
\sum_{i=0}^{m-d-1}(p_{i}+p_{d+i})$ as $n\rightarrow \infty $, thus 
\begin{eqnarray*}
\mathfrak{H}_{\mu }\log _{2}d &=&\lim_{n\rightarrow \infty }\frac{h_{\mu }(n)%
}{n}=\lim_{n}\triangle h_{n} \\
&\in &\left[ -\sum_{i=0}^{m-1}p_{i}\log
_{2}p_{i}-\sum_{i=0}^{m-d-1}(p_{i}+p_{d+i}),-\sum_{i=0}^{m-1}p_{i}\log
_{2}p_{i}\right] .
\end{eqnarray*}
\end{proof}

\begin{remark}
We remark that the quantity $-\sum_{i=0}^{m-1}p_{i}\log _{2}p_{i}/\log d$ is
known as the similarity dimension of this measure and the similarity
dimension of a self-similar measure is always an upper bound for its
Hausdorff dimension. We also note that if the $p_{i}$ are suitably biased,
then $\left\vert \sum_{i=0}^{m-1}p_{i}\log _{2}p_{i}\right\vert $ is very
small, so the entropy less than $1$ and hence the measure is singular.
\end{remark}

In the next section we will illustrate this bound in some concrete examples.

\section{Entropy estimates and bounds}

\label{sec:comp}

\subsection{Uniform case}

For the uniform $(m,d)$-measure $\mu $, Corollary \ref{cor:T general} gives 
\begin{equation*}
\mathfrak{H}_{\mu }=\frac{T_{m,d}(1)}{\log _{2}d}\text{ where }%
T_{m,d}(x)=\log _{2}m-R(x)L(x/m),
\end{equation*}%
and the functions $R(x)$ and $L(x)=\sum_{n=1}^{\infty }\ell (n)x^{n}$ are as
given in the proof of Theorem \ref{thm:T general}. Since \cite{gkt} gives $%
\left\vert \ell (n)\right\vert \leq 2/(15\ln 2)$ for $n\geq 3$, we see that 
\begin{equation*}
\left\vert \sum_{n=N+1}^{\infty }\ell (n)\frac{1}{m^{n}}\right\vert \leq 
\frac{2m^{-N}}{15(m-1)\ln 2}.
\end{equation*}%
This allows us to determine, for each $\epsilon >0,$ an integer $N$ such
that $\mathfrak{H}_{\mu }$ differs from the sum of the first $N$ terms by at
most $\epsilon $. In Table \ref{tab:entropy} we have used this to compute
the entropy (equivalently, the Hausdorff dimension) of $\mu $ for $2\leq
d\leq 10$ and $d<m\leq 2d-1$ to 10 decimal points. We have also indicated
the integer $N$ that was necessary to perform this calculation. All these
measures are singular as their entropy is strictly less than one.

\begin{table}[tbp]
\begin{center}
\begin{tabular}{l|l|l|lll|l|l|l}
$d$ & $r$ & Entropy & $N$ &  & $d$ & $r$ & Entropy & $N$ \\ 
\cline{1-4}\cline{6-9}
2 & 1 & .9887658714 & 20 &  & 8 & 1 & .9847485173 & 9 \\ 
&  &  &  &  & 8 & 2 & .9774806174 & 9 \\ 
3 & 1 & .9696751053 & 15 &  & 8 & 3 & .9756417435 & 9 \\ 
3 & 2 & .9888495673 & 13 &  & 8 & 4 & .9775746034 & 8 \\ 
&  &  &  &  & 8 & 5 & .9821685970 & 8 \\ 
4 & 1 & .9723043945 & 13 &  & 8 & 6 & .9886592929 & 8 \\ 
4 & 2 & .9744950829 & 12 &  & 8 & 7 & .9965086797 & 8 \\ 
4 & 3 & .9917161717 & 11 &  &  &  &  &  \\ 
&  &  &  &  & 9 & 1 & .9865170224 & 8 \\ 
5 & 1 & .9763335645 & 11 &  & 9 & 2 & .9793377946 & 8 \\ 
5 & 2 & .9724991949 & 11 &  & 9 & 3 & .9766109550 & 8 \\ 
5 & 3 & .9798311869 & 10 &  & 9 & 4 & .9770870210 & 8 \\ 
5 & 4 & .9936600571 & 10 &  & 9 & 5 & .9798993303 & 8 \\ 
&  &  &  &  & 9 & 6 & .9844327917 & 8 \\ 
6 & 1 & .9797875450 & 10 &  & 9 & 7 & .9902423029 & 8 \\ 
6 & 2 & .9736047261 & 10 &  & 9 & 8 & .9970004104 & 8 \\ 
6 & 3 & .9759857840 & 9 &  &  &  &  &  \\ 
6 & 4 & .9837495163 & 9 &  & 10 & 1 & .9879592199 & 8 \\ 
6 & 5 & .9949548480 & 9 &  & 10 & 2 & .9810095410 & 8 \\ 
&  &  &  &  & 10 & 3 & .9777693162 & 8 \\ 
7 & 1 & .9825497418 & 9 &  & 10 & 4 & .9772748839 & 8 \\ 
7 & 2 & .9754969280 & 9 &  & 10 & 5 & .9788382244 & 8 \\ 
7 & 3 & .9751879641 & 9 &  & 10 & 6 & .9819582547 & 8 \\ 
7 & 4 & .9793642691 & 9 &  & 10 & 7 & .9862637671 & 7 \\ 
7 & 5 & .9865742717 & 9 &  & 10 & 8 & .9914757004 & 7 \\ 
7 & 6 & .9958552030 & 8 &  & 10 & 9 & .9973815856 & 7%
\end{tabular}%
\end{center}
\caption{Entropy of $(m,d)$-measures, to 10 decimal places}
\label{tab:entropy}
\end{table}

\subsection{Non-uniform case}

\begin{example}
Take $(m,d)=(3,2)$, $p_{0}=\frac{1}{t+2}$, $p_{1}=\frac{t}{t+2}$, $p_{2}=%
\frac{1}{t+2}$ and let $\mu $ be the corresponding self-similar measure. By
Prop. \ref{biased}, $\mathfrak{H}_{\mu }$ lies in the interval 
\begin{equation*}
\left[ \frac{-t\ln \frac{t}{t+2}-2\ln \frac{1}{t+2}}{\ln 2}-\frac{2}{t+2},%
\frac{-t\ln \frac{t}{t+2}-2\ln \frac{1}{t+2}}{\ln 2}\right] .
\end{equation*}%
This can be improved. Indeed, one can use an induction argument in this case
to show that if $p,q$ are adjacent nodes, then $1/(t+1)\leq p/(p+q)\leq
t/(t+1)$. Consequently, we may restrict the range of $D$ to $[D(\frac{1}{2}%
),D(\frac{1}{t+1})]$. With this improvement, for the values of $t=1,\dots
,10 $, we deduce that the entropy $\mathfrak{H}_{\mu }$ belongs to the
intervals given in Table \ref{tab:non-uniform}. We note that the first entry 
$1/3,1/3,1/3$ corresponds to the $d=2$, $r=1$ case of Table \ref{tab:entropy}%
.

Of course, the dimension of any measure $\mu $ on $\mathbb{R}$ is bounded
above by one, hence the upper bounds from this technique only give
meaningful bounds on the dimension after the fifth entry when they establish
that these measures are singular.

As the lower bound for the entropy for the $\frac{1}{4},\frac{1}{2},\frac{1}{%
4}$ measure is one, this measure has dimension one. In fact, this measure is
the convolution $m\ast m$ where $m$ is Lebesgue measure restricted to $[0,1]$%
.

\begin{table}[tbp]
\begin{center}
\begin{tabular}{l|l|l}
$p_0, p_1, p_2$ & Lower bound & Upper bound \\ \hline
$\frac{1}{3}, \frac{1}{3}, \frac{1}{3} $ & .9182958344 & 1.584962501 \\ 
$\frac{1}{4}, \frac{2}{4}, \frac{1}{4} $ & 1. & 1.040852083 \\ 
$\frac{1}{5}, \frac{3}{5}, \frac{1}{5} $ & .9709505935 & 1.046439344 \\ 
$\frac{1}{6}, \frac{4}{6}, \frac{1}{6} $ & .9182958336 & 1.010986469 \\ 
$\frac{1}{7}, \frac{5}{7}, \frac{1}{7} $ & .8631205682 & .9631141620 \\ 
$\frac{1}{8}, \frac{6}{8}, \frac{1}{8} $ & .8112781250 & .9133599301 \\ 
$\frac{1}{9}, \frac{7}{9}, \frac{1}{9} $ & .7642045081 & .8656346320 \\ 
$\frac{1}{10}, \frac{8}{10}, \frac{1}{10} $ & .7219280941 & .8212764285 \\ 
$\frac{1}{11}, \frac{9}{11}, \frac{1}{11} $ & .6840384354 & .7805846910 \\ 
$\frac{1}{12}, \frac{10}{12}, \frac{1}{12}$ & .6500224217 & .7434395905%
\end{tabular}%
\end{center}
\caption{Entropy of non-uniform Cantor-like measures}
\label{tab:non-uniform}
\end{table}
\end{example}

\providecommand{\bysame}{\leavevmode\hbox to3em{\hrulefill}\thinspace}

\providecommand{\MR}{\relax\ifhmode\unskip\space\fi MR }


\providecommand{\MRhref}[2]{
  \href{http://www.ams.org/mathscinet-getitem?mr=#1}{#2}

}

\providecommand{\href}[2]{#2}

\end{document}